\crefname{enumi}{}{}
\crefname{equation}{}{}
\def\@tocline#1#2#3#4#5#6#7{\relax
  \ifnum #1>\c@tocdepth % then omit
  \else
    \par \addpenalty\@secpenalty\addvspace{#2}%
    \begingroup \hyphenpenalty\@M
    \@ifempty{#4}{%
      \@tempdima\csname r@tocindent\number#1\endcsname\relax
    }{%
      \@tempdima#4\relax
    }%
    \parindent\z@ \leftskip#3\relax \advance\leftskip\@tempdima\relax
    \rightskip\@pnumwidth plus4em \parfillskip-\@pnumwidth
    #5\leavevmode\hskip-\@tempdima
      \ifcase #1
       \or\or \hskip 1em \or \hskip 2em \else \hskip 3em \fi%
      #6\nobreak\relax
    \dotfill\hbox to\@pnumwidth{\@tocpagenum{#7}}\par
    \nobreak
    \endgroup
  \fi}
\newtheorem{theorem}{Theorem}[section]
\newtheorem{proposition}[theorem]{Proposition}
\newtheorem{lemma}[theorem]{Lemma}
\newtheorem{corollary}[theorem]{Corollary}
\theoremstyle{definition}
\newtheorem{definition}[theorem]{Definition}
\newtheorem{remark}[theorem]{Remark}
\numberwithin{equation}{section}
\def \R {{\mathbb {R}}}
\def\grad{\nabla}
\def\supp{\operatorname{spt}}
\def\essliminf{\operatorname{essliminf}}
\def\dive{\operatorname{div}}
\newcommand{\dx}{\, {\rm d} x}
\newcommand{\dt}{\, {\rm d} t}
\newcommand{\de}{\, {\rm d}}
\newcommand{\abs}[1]{\left|#1\right|}
\newcommand{\norm}[1]{\left \lVert #1\right \rVert}
\newcommand{\Vnorm}[2][T]{\|#2\|_{\V(\Omega_{#1})}}
\newcommand{\Vprimenorm}[2][T]{\|#2\|_{\V'(\Omega_{#1})}}
\newcommand{\Wnorm}[2][T]{\|#2\|_{\mathcal{W}(\Omega_{#1})}}
\newcommand{\energy}[2][T]{\|#2\|_{\text{en},\Omega_{#1}}}
\newcommand{\V}{\mathcal{V}}
\newcommand{\W}{\mathcal{W}}
\newcommand{\varcappre}{\operatorname{cap'}_{\text{var}}}
\newcommand{\varcap}{\operatorname{cap}_{\text{var}}}
\newcommand{\encap}{\operatorname{cap}_{\text{en}}}
\newcommand{\capacity}{\operatorname{cap}}
\renewcommand{\hat}{\widehat}
\newcommand{\trm}{\textrm}
\newcommand{\ol }{\overline}
\begin{document} 
	
%%%%%%%%%%%%%%%%%%%%%%%%%%%%%%%%%%%%%%%%%%%%%%%%%%%%%%%%%%
% Title:author and so on 
%%%%%%%%%%%%%%%%%%%%%%%%%%%%%%%%%%%%%%%%%%%%%%%%%%%%%%%%%%

\author[B. Avelin and O. Saari]{Benny Avelin and Olli Saari}

\title[Characterizations of zero sets]
{Characterizations of interior polar sets for the degenerate $p$-parabolic equation}

\address{
	Benny Avelin,
	Department of Mathematics and Systems Analysis, 
	Aalto University School of Science,
	FI-00076 Aalto, 
	Finland
}
\address{
	Benny Avelin,
	Department of Mathematics, 
	Uppsala University,
	S-751 06 Uppsala, 
	Sweden
} 
\email{\color{blue} benny.avelin@math.uu.se}

\address{Olli Saari,	
	Department of Mathematics and Systems Analysis, 
	Aalto University School of Science,
	FI-00076 Aalto, 
	Finland
} \email{\color{blue} olli.saari@aalto.fi} 

\date{\today}

\subjclass[2010]{
	Primary 35K92; 
	Secondary: 31C45, 31C15
} 

\keywords{
	Parabolic capacity, 
	degenerate parabolic equations, 
	nonlinear potential theory, 
	p-parabolic equation, 
	p-Laplace,
	parabolic Hausdorff measure, 
	interior polar sets,
	removability,
	characterization
}

\begin{abstract}
	This paper deals with different characterizations of sets of nonlinear parabolic capacity zero, with respect to the parabolic $p$-Laplace equation. Specifically we prove that certain interior polar sets can be characterized by sets of zero nonlinear parabolic capacity. Furthermore we prove that zero capacity sets are removable for bounded supersolutions and that sets of zero capacity have a relation to a certain parabolic Hausdorff measure.
\end{abstract}
	
\maketitle 

%%%%%%%%%%%%%%%%%%%%%%%%%%%%%%%%%%%%%%%%%%%%%%%%%%%%%%%%%%
% Paper starts 
%%%%%%%%%%%%%%%%%%%%%%%%%%%%%%%%%%%%%%%%%%%%%%%%%%%%%%%%%%

\section{Introduction} The notion of capacity is central in potential theory. It is used for example when studying boundary regularity, characterizations of polar sets and removability results. In the nonlinear potential theory of elliptic equations, a capacity defined using the relevant Sobolev space has proved to be the correct way to measure exceptional sets with respect to supersolutions. For an account see \cite{HKM,KM} and \cite{Ma}. With different methods and definitions, the corresponding results have also been established for the heat equation and some of its generalizations, see \cite{EG,GZ1,GZ2,La1,La2} and \cite{W}. 

In this work we will consider the degenerate $p$-parabolic equation 
\begin{equation}
	\label{theeq} u_t - \dive( \abs{\nabla u}^{p-2} \nabla u ) = 0 , \quad 2<p< \infty\,. 
\end{equation}
The potential theory with respect to \cref{theeq} is different when compared to the heat equation, and many central problems are still open, e.g. a Wiener criterion. In addition it has not been obvious what is the most feasible way to define the capacity in this case and whether all the desired results can even be obtained using one definition. For some capacities related to the equation in \Cref{theeq} see \cite{DPP,S1} and \cite{S}.

In this work we will be concerned with the \emph{nonlinear parabolic capacity} that was suggested in \cite{KKKP}. That is for a set $E \subset \Omega_\infty = \Omega \times (0,\infty)$ they defined the \emph{nonlinear parabolic capacity} of $E$ as 
\begin{equation}
	\label{capaKKKP} \capacity(E,\Omega_\infty) = \sup \big\{\mu(\Omega_\infty): \supp \mu \subset E, 0 \leq u_\mu \leq 1\big\}\,, 
\end{equation}
where $\mu$ is a nonnegative Radon measure, and $u_\mu$ is a weak solution to the measure data problem
\begin{equation*} 
	\begin{cases}
		\partial_tu-\dive(\abs{\grad u}^{p-2}\grad u)=\mu & \text{in $\Omega_\infty$}, \\
		u(x,t) = 0 & \text{for $(x,t) \in \partial_p \Omega_\infty$.} 
	\end{cases}
\end{equation*}
In \cite{KKKP} they showed that the capacity defined in \cref{capaKKKP} makes sense, and used it to show an estimate on the superlevel sets of superparabolic functions. Considering the recent results in \cite{KuLiPa}, the superlevel set estimate in \cite{KKKP} gives that  the infinity set $E$ of a superparabolic function in the \emph{regular} class $\mathfrak{B}$\footnote{Class $\mathfrak{B}$ is all functions satisfying $u \in L^{p-1+p/n-\epsilon}_{\trm{loc}}$ for each $\epsilon > 0$, see \cite{KuLiPa}.}, has zero capacity in terms of the capacity in \cref{capaKKKP}. In the linear case the results in \cite{P} connects thermal capacity to quantities of variational nature. A corresponding result between variational parabolic capacity and nonlinear parabolic capacity was established in \cite{AKP}.

In this paper, we contribute to the theory by showing that the nonlinear parabolic capacity (see \cref{capaKKKP}) can actually be used to characterize certain interior polar sets, see \Cref{s.polar}. The result proves that the definition is in this sense optimal. Furthermore, it illustrates the fact that classical poles play some role in parabolic nonlinear potential theory. Intuitively the more reasonable objects would be the ''one-sided'' infinities defined in \cite{KuLiPa}. For instance, a solution with Dirac mass as a source, even though it is unbounded it is finite everywhere and thus it has no pole. 

In \Cref{s.balayage} we study the relation between the nonlinear parabolic capacity and the balayage. It is shown that the property of having capacity zero is equivalent to having an identically zero balayage for a very general class of subsets. 

In \Cref{s.removability} we prove removability results in terms of sets of zero nonlinear parabolic capacity. 

Finally in \Cref{s.Hausdorff} we provide dimensional estimates for polar sets with respect to certain parabolic Hausdorff type measures. Our proofs rely on an effective use of the tools developed in \cite{AKP} and \cite{KKKP}.

\section*{Acknowledgements} The first author was supported by the Swedish Research Council, \#637-2014-6822, the second author was supported by the V\"ais\"al\"a Foundation.

\section{Preliminaries}

\subsection{Parabolic spaces} We start by introducing some notions and notations. Our ambient space is $\mathbb{R}^{n+1}$ with a generic point denoted as $z = (x,t)$. The coordinates are called space and time, respectively. The reference domain $\Omega \subset \mathbb{R}^{n}$ is assumed to be smooth, and we denote $\Omega_T = \Omega \times (0,T)$. For any open set $U \subset \mathbb{R}^{n}$ we denote $U_{t_1,t_2} = U \times (t_1,t_2)$. By the parabolic boundary we mean the following subset of the topological boundary
\begin{equation}
	\nonumber \label{} 
	\partial_p \Omega_T = \big (\overline{\Omega} \times \{0\}\big )\cup \big (\partial \Omega \times (0,T]\big )\,. 
\end{equation}
Let us also define $a \approx b$ to mean that there is a constant $c(n,p)>1$ such that $c^{-1} a \leq b \leq c a$, and $\lesssim$ has the obvious meaning. If we use a subscript $a \approx_{q} b$ then the constant $c$ also depends on $q$.

For an open set $U \subset \mathbb{R}^{n}$ we denote by $W^{1,p}(U)$ and $W_0^{1,p}(U)$ the closures of $C^{\infty}(U)$ and $C_0^{\infty}(U)$ respectively, with respect to the Sobolev norm $\norm{f}_{L^{p}(U)} + \norm{ \nabla u}_{L^{p}(U)}$. The parabolic Sobolev space $L^{p}((0,T) ; W^{1,p}(U))$ consists of functions $u$ on $\mathbb{R}^{n+1}$ such that 
\begin{itemize}
	\item $x \mapsto u(x,t)$ is in $W^{1,p}(U)$ for a.e. $t \in (0,T)$,
	\item $t \mapsto \norm{u(\cdot,t)}_{W^{1,p}(U)}$ is measurable, and
	\item the norm
	\begin{equation*} 
		\norm{u}_{L((0,T); W^{1,p}(U))} := \left( \int_{0}^{T} \norm {u}_{W^{1,p}(U)}^{p} \, \dt \right)^{1/p} 
	\end{equation*}
	is finite. The local version is defined in a standard way.
\end{itemize}

\subsection{The equation} We study solutions of the degenerate $p$-parabolic equation
\begin{equation*}
	u_t - \dive( \abs{\nabla u}^{p-2} \nabla u ) = 0, \quad p > 2\,. 
\end{equation*}
We next define the first notion of sub and supersolution to be used in this paper.
\begin{definition}
	Let $\Theta \subset \R^{n+1}$ be an open set. A function $u$ is a weak supersolution in $\Theta$ if $u \in L^p(t_1,t_2,W^{1,p}(U))$ whenever $U_{t_1,t_2} \Subset \Theta$, and $u$ satisfies the inequality
	\begin{equation*}
		\int_{\Omega_T }( \abs{\nabla u}^{p-2} \nabla u \cdot \varphi - u \partial_t \varphi )\dx \dt \geq 0
	\end{equation*}
	for all nonnegative $\varphi \in C_0^{\infty}(\Omega_T)$. 
	It is a subsolution if the reverse inequality holds and a solution if equality holds.
\end{definition}
For a set of the form $\Omega_T = \Omega \times (0,T)$ we define boundary value zero at the parabolic boundary as follows: $u \in L^{p}((0,T); W_{0}^{1,p}(\Omega))$ and
\begin{equation*}
	\lim_{h \to 0} \frac{1}{h} \int_{0}^{h} \int_{\Omega} \abs{u}^{2} \dx \dt = 0\,. 
\end{equation*}
Recall that the lower semicontinuous regularization of a function $u$ is defined as
\begin{equation*} 
	\hat{u}(x,t) := \liminf_{(y,s) \to (x,t)} u(y,s)\,.
\end{equation*}
Here $\inf$ is the essential infimum.
\begin{theorem}[\cite{K}] \label{thm-regularization}
	Let $u$ be a weak supersolution in the domain $\Theta \subset \R^{n+1}$. Then the lower semicontinuous regularization $\hat u$ of $u$ is a weak supersolution and $u=\hat u$ almost everywhere in $\Theta$.
\end{theorem}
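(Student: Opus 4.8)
The plan is to separate the soft part of the statement from its analytic core. Being a weak supersolution in $\Theta$ is a property of the a.e.\ equivalence class of $u$: both the local membership in $L^{p}(t_1,t_2;W^{1,p}(U))$ and the tested integral inequality are unchanged if $u$ is modified on a null set. Hence, once we know $\hat u=u$ a.e., it is automatic that $\hat u$ is a weak supersolution, and the whole theorem reduces to (I) $\hat u$ is lower semicontinuous everywhere, and (II) $\hat u=u$ a.e.\ in $\Theta$. Part (I) uses nothing about the equation: writing $\hat u(z)=\sup_{r>0}\operatorname{essinf}_{Q_r(z)}u$ for shrinking parabolic cylinders $Q_r(z)$ centred at $z$ (this is just the essential $\essliminf$ of $u$ at $z$), if $z_k\to z$ and $\varepsilon>0$ we pick $r$ with $\operatorname{essinf}_{Q_r(z)}u>\hat u(z)-\varepsilon$; for large $k$ we have $Q_{r/2}(z_k)\subset Q_r(z)$, so $\hat u(z_k)\ge\operatorname{essinf}_{Q_{r/2}(z_k)}u>\hat u(z)-\varepsilon$, and therefore $\liminf_k\hat u(z_k)\ge\hat u(z)$.

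For (II), one inequality is still soft: at a Lebesgue point $z$ of $u$ — hence at a.e.\ $z$ — we have $u(z)=\lim_{r\to0}|Q_r(z)|^{-1}\int_{Q_r(z)}u\de z\ge\lim_{r\to0}\operatorname{essinf}_{Q_r(z)}u=\hat u(z)$, so $\hat u\le u$ a.e. The content is the reverse bound $\hat u\ge u$ a.e. I would first reduce to a convenient situation. Truncation from above preserves supersolutions — if $u$ is a weak supersolution, so is $\min(u,k)=k-(k-u)^{+}$, because $k-u$ is a weak subsolution and the positive part of a subsolution is a subsolution — and since $\widehat{\min(u,k)}=\min(\hat u,k)$ while $\sup_k\min(u,k)=u$ a.e., it suffices to prove $\hat u\ge u$ a.e.\ when $u$ is bounded above on the sub-cylinder under consideration. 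Moreover $-u$ is a weak subsolution, so the local boundedness (De Giorgi--Moser) estimate for subsolutions of \cref{theeq} shows that $u$ is locally essentially bounded below; localizing and adding a constant we may assume $0\le u\le M$ there.

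The engine for the reverse bound is the weak Harnack inequality for nonnegative supersolutions of the degenerate equation \cref{theeq}: there are constants $C>0$, $q>0$ and, because $p>2$ forces intrinsic scaling, admissible sub-cylinders $Q^{-}$ (in the past of the centre) and $Q^{+}$ (around/after the centre) with
\[
\Big(\frac{1}{|Q^{-}|}\int_{Q^{-}}u^{q}\de z\Big)^{1/q}\ \le\ C\operatorname{essinf}_{Q^{+}}u\ +\ (\text{a scaling term that tends to }0\text{ as the cylinders shrink}).
\]
Feeding this, together with the Caccioppoli/energy estimate, into a De Giorgi--type density iteration, one shows that at a.e.\ $z_0$ (those Lebesgue points of $u$ that are also Lebesgue points of the time-slices $t\mapsto u(\cdot,t)$) the function $u$ cannot remain, on a set of positive density, below a level $u(z_0)-\varepsilon$ along any sequence of shrinking past-cylinders at $z_0$; consequently $\operatorname{essinf}_{Q_\rho(z_0)}u\to u(z_0)$ as $\rho\to0$, i.e.\ $\hat u(z_0)\ge u(z_0)$. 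Combined with the easy inequality this gives $\hat u=u$ a.e., and then $\hat u$ is a weak supersolution by the first paragraph.

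The hard step is this last density argument, and the difficulty is intrinsically parabolic. Because \cref{theeq} is time-irreversible, a supersolution may jump \emph{up} going forward in time, so the essential liminf at $z_0$ is really governed by the behaviour of $u$ on cylinders lying in the past of $z_0$, and one must match the spatial regularity of the time-slices with the one-sided (forward) control in time that the weak formulation provides. The degeneracy $p>2$ makes this more delicate still: the weak Harnack inequality is only available on intrinsically scaled cylinders, so the Lebesgue-point and density bookkeeping has to be organized with respect to that geometry rather than standard parabolic cylinders. This is precisely what is carried out in \cite{K}.
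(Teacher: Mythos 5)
The paper does not prove this statement at all; it is imported verbatim from Kuusi's paper \cite{K}, so there is nothing internal to compare your argument against. Your soft reductions are all correct and cleanly organized: the supersolution property is a property of the a.e.\ class, the essential $\liminf$ regularization is automatically lower semicontinuous, the inequality $\hat u\le u$ a.e.\ follows at Lebesgue points, truncation $\min(u,k)$ and local boundedness below of $-u$ (a subsolution) reduce matters to $0\le u\le M$. These are exactly the preliminary steps one finds in \cite{K}.

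However, as a proof the proposal has a genuine gap, and it sits exactly where the theorem's content lies: the claim that a De Giorgi--type density iteration built on the intrinsic weak Harnack inequality yields $\operatorname{essinf}_{Q_\rho(z_0)}u\to u(z_0)$ at a.e.\ $z_0$ is asserted and then explicitly deferred to \cite{K}. This step is not routine. First, for $p>2$ the weak Harnack inequality carries an additive term of the form $(r^p/\theta)^{1/(p-2)}$ which is \emph{not} small on standardly scaled cylinders ($\theta\sim r^p$); making it vanish forces an intrinsic choice of $\theta$ depending on the size of $u$, and the entire Lebesgue-point and density bookkeeping must be redone in that geometry. Second, the weak Harnack bounds the essential infimum over a cylinder lying strictly \emph{in the future} of the slice where the integral average is taken, whereas $\hat u(z_0)$ involves the essential infimum over full neighbourhoods of $z_0$, including earlier times; bridging this requires the one-sided-in-time argument you allude to but do not carry out. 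Since both difficulties are precisely what Kuusi's paper resolves, your text is an accurate roadmap of the known proof rather than a proof; if the intent were to make the paper self-contained on this point, the density iteration would have to be written out.
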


Another interesting class of supersolutions which contains\footnote{The lower semicontinous representative.} the weak supersolutions are the superparabolic functions, which we will define next. 
\begin{definition}
	\label{def_superparab} Let $\Theta \subset \R^{n+1}$ be a domain. A function $u : \Theta \to (-\infty, \infty]$ is superparabolic if 
	\begin{enumerate}
		[label=(\roman*)] 
		\item $u$ is lower semicontinuous,
		\item $u$ is finite in a dense subset of ${\Theta}$, and
		\item the following parabolic comparison principle holds: Let $U_{t_1,t_2}\Subset\Theta$ be a space-time cylinder, and let $h$ be a $p$-parabolic function in $U_{t_1,t_2}$ which is continuous in $\overline{U_{t_1,t_2}}$. Then, if $h\leq u$ on $\partial_p U_{t_1,t_2}$, $h\leq u$ also in $U_{t_1,t_2}$. 
	\end{enumerate}
\end{definition}

A few comments are in order. The sets $U$ can be assumed to be balls or arbitrary open sets; the same class of functions comes out anyway (see \cite{KKP}). Every supersolution is superparabolic after a redefinition in a set of measure zero (see \cite{K}), but there are superparabolic functions that do not belong to the correct function spaces in order to be supersolutions (see for example the Barenblatt solution or the friendly giant). We do however have the following result.

\begin{theorem}[\cite{KiLi,KKP}]
	Let $u$ be a locally bounded superparabolic function in the domain $\Theta \subset \R^{n+1}$, then $u$ is a weak supersolution in $\Theta$.
\end{theorem}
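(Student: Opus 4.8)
The statement to prove is: a locally bounded superparabolic function $u$ in a domain $\Theta$ is a weak supersolution in $\Theta$. The plan is to reduce everything to local considerations, upgrade the qualitative comparison property in \Cref{def_superparab}(iii) to the integral inequality defining weak supersolutions, and then recover the needed Sobolev regularity from a Caccioppoli-type estimate. First I would fix a subcylinder $Q = U_{t_1,t_2} \Subset \Theta$ and exploit boundedness to work with $u$ bounded on a neighbourhood of $\overline Q$, so that all quantities below are finite. The key device is an approximation from below: let $\mu_j$ be the Boccardo--Gallou\"et type smoothing, or more geometrically, let $h_j$ solve the obstacle/Dirichlet problem in $Q$ with smooth boundary data $\psi_j \nearrow u|_{\partial_p Q}$ and appeal to \Cref{def_superparab}(iii) to see that $h_j \leq u$ and $h_j \nearrow$ a limit $v$; a separate argument (using that $u$ is the smallest superparabolic function with these boundary values, i.e. a Perron-type characterization, or the comparison principle applied in both directions on subcylinders where $u$ is continuous) identifies $v = u$ a.e.

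Once $u$ is realized as an increasing limit of $p$-parabolic functions $h_j$ that are uniformly bounded (by the bound on $u$), the standard Caccioppoli estimate for solutions gives
\begin{equation*}
	\int_{Q'} \abs{\nabla h_j}^p \dx \dt \leq C\big(Q',Q,\sup\abs{u}\big)
\end{equation*}
uniformly in $j$ for every $Q' \Subset Q$. Hence $\{h_j\}$ is bounded in $L^p(t_1',t_2';W^{1,p}(U'))$, so along a subsequence $\nabla h_j \rightharpoonup \nabla u$ weakly in $L^p_{\mathrm{loc}}$, which both places $u$ in the right parabolic Sobolev space locally and lets us pass to the limit in the weak formulation. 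Each $h_j$ satisfies
\begin{equation*}
	\int_{Q} \big( \abs{\nabla h_j}^{p-2}\nabla h_j \cdot \nabla \varphi - h_j \partial_t \varphi \big)\dx\dt = 0
\end{equation*}
for $\varphi \in C_0^\infty(Q)$, $\varphi \geq 0$. The linear term passes to the limit trivially by dominated convergence ($h_j \to u$ boundedly). For the gradient term one uses monotonicity of the $p$-Laplacian: weak convergence of $\nabla h_j$ together with the fact that the $h_j$ increase is enough to get $\liminf \int \abs{\nabla h_j}^{p-2}\nabla h_j\cdot\nabla\varphi \geq \int \abs{\nabla u}^{p-2}\nabla u\cdot\nabla\varphi$ when tested against the right sign, yielding the supersolution inequality $\geq 0$ in the limit. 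Finally, a covering argument over subcylinders exhausting $\Theta$ removes the auxiliary localization.

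The main obstacle I anticipate is the identification $v=u$ a.e.\ and the passage to the limit in the nonlinear term with the correct inequality direction. The comparison principle in \Cref{def_superparab}(iii) is only stated against \emph{continuous} $p$-parabolic functions on subcylinders, so some care is needed to deduce that the monotone limit of the $h_j$ actually coincides with $u$ rather than some smaller superparabolic function; this is where one invokes the established theory (the references \cite{KiLi,KKP,KKP}) that superparabolic functions are "Perron solutions" of their own boundary data on small cylinders, or a pointwise comparison argument using continuity points of $u$. Proving the Caccioppoli bound is routine once the approximation is in place; likewise the weak lower semicontinuity of the relevant convex functional handles the nonlinear term without genuine difficulty. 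So the real content is the structural step that exhibits a locally bounded superparabolic $u$ as an increasing, locally uniformly bounded limit of bounded weak solutions, after which standard compactness and monotonicity finish the proof.
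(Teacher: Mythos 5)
The paper does not prove this statement; it is imported from \cite{KiLi,KKP}, so the comparison is against the proofs there. Your skeleton (approximate $u$ from below by nice supersolutions, get uniform Caccioppoli bounds, pass to the limit) is indeed the right one, but as written it has two genuine gaps. First, the approximating objects cannot be solutions of the Dirichlet problem in $Q$ with boundary data $\psi_j \nearrow u|_{\partial_p Q}$: the increasing limit of those is the Perron solution of the boundary data, which is in general \emph{strictly below} $u$ inside $Q$ (take $u=u_\mu$ with $\mu$ a nontrivial absolutely continuous measure supported in $Q$; then $u$ is a bounded superparabolic function that dominates, but does not equal, the solution with its own boundary values). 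The identification $v=u$ therefore fails, and no Perron-type characterization rescues it, because such a characterization holds only for solutions. The correct device is the obstacle problem: choose continuous $\psi_j \nearrow u$ in the interior (possible by lower semicontinuity) and set $h_j=\hat R^{\psi_j}$; property (iii) of the obstacle problem gives $\psi_j\le h_j\le u$, so $h_j\nearrow u$, and the $h_j$ are continuous weak \emph{supersolutions}, uniformly bounded by $\sup_Q u$ — at the price that your displayed identity becomes an inequality $\ge 0$, which is all you need.

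Second, the limit passage in the principal term is not a matter of weak lower semicontinuity or of "testing against the right sign." The quantity $\int_Q \abs{\nabla h_j}^{p-2}\nabla h_j\cdot\nabla\varphi\dx\dt$ is linear in the vector field $\abs{\nabla h_j}^{p-2}\nabla h_j$, not a convex functional of $\nabla h_j$, and $\nabla\varphi$ has no sign; weak convergence $\nabla h_j\rightharpoonup\nabla u$ in $L^p$ does not imply $\abs{\nabla h_j}^{p-2}\nabla h_j\rightharpoonup\abs{\nabla u}^{p-2}\nabla u$ in $L^{p'}$. One must prove almost everywhere (equivalently, strong $L^{q}$ for $q<p$) convergence of the gradients, which is done in \cite{KKP} by testing the equations for $h_j$ and $h_k$ with truncations of $h_j-h_k$ and exploiting the strict monotonicity of $\xi\mapsto\abs{\xi}^{p-2}\xi$; this is the technical heart of the theorem, and it is exactly what the present paper packages as \Cref{lemma_supcon}. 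With the obstacle-problem approximants and the supersolution Caccioppoli estimate of \cite{KHar}, your argument reduces to an application of \Cref{lemma_supcon}, but as proposed neither the identification of the limit nor the convergence of the nonlinear term goes through.
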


Note that the class of superparabolic functions is closed with respect to increasing sequences of superparabolic functions as long as the limit is finite a.e., a property the class of supersolutions does not have. 

Since we assumed $\Omega$ to be a smooth domain, we have from \cite{KiLuPa,KiLuPa1} that for every supersolution $u$ there is a Radon measure $\mu_u$ such that 
\begin{equation}
	\label{measprob} 
	\begin{cases}
		\partial_tu-\operatorname{div}(\abs{\grad u}^{p-2}\grad u)=\mu_u & \text{in $\Omega_\infty$}\,, \\
		u(x,t) = 0 & \text{for $(x,t) \in \partial_p \Omega_\infty$}
	\end{cases}
\end{equation}
in the sense of distributions. Conversely, for every Radon measure $\mu$ there is a supersolution $u_\mu$ solving \Cref{measprob} in the sense of distributions. There are, however, unbounded superparabolic functions that do not induce a Radon measure. For more on these aspects, see \cite{LiKi,KuLiPa}.

\subsection{Obstacle problem} \label{subsecobst} Solutions of obstacle problems will be important for us. 
\begin{definition}
	Let $\psi$ be a bounded measurable function, and consider the class
	\begin{equation*} 
		\begin{split}
			\mathcal S_{\psi}=\big \{\hat u\,:\, u &\textrm{ is a weak supersolution}, \hat u\geq\psi\ \text{in }\Omega_{T}\big \}\,. 
		\end{split}
	\end{equation*}
	Define the function
	\begin{equation*} 
		R^\psi(x,t)=\inf_{u \in \mathcal S_{\psi}} u(x,t)\,. 
	\end{equation*}
	We say that its lower semicontinuous regularization $\hat R^\psi$ is the solution to the obstacle problem with obstacle $\psi$. 
\end{definition}

If $\psi \in C(\overline \Omega_T)$, then the solution to the obstacle problem has the following properties. 
\begin{enumerate}
	[label=(\roman*)] 
	\item $\hat R^\psi \in C(\ol \Omega_T)$, 
	\item $\hat R^\psi$ is a weak solution in $\{(x,t)\in\Omega_T\,:\,\hat R^\psi(x,t)>\psi(x,t)\}$, and 
	\item $\hat R^\psi$ is the smallest superparabolic function above $\psi$ , i.e. if $v$ is superparabolic in $\Omega_T$ and $v\ge\psi$, then $v\ge \hat R^\psi$. 
\end{enumerate}
\begin{definition}
	Let $\psi$ be a bounded measurable function and let $U \subset \Omega_\infty$ be a set, then we define 
	\begin{equation}
		\nonumber \label{} R^\psi_U := R^{(\psi\, 1_U)}\,. 
	\end{equation}
	We call $R^{\psi}_U$ the reduced function of $\psi$ relative to $U$ in $\Omega_\infty$ or the r\'eduite. The lower semicontinuous regularization $\hat R^\psi_U$ is called the balayage. In the case when $\psi = 1$ we denote $R_U := R^1_U$. 
\end{definition}

For more about parabolic obstacle problems, see e.g. \cite{KKS} and \cite{LP}.

\subsection{Capacity and capacitability} 
In this section we will be concerned with the different notions of capacity that we will be using in this paper, and their relation to each other. We begin by noting that the \emph{nonlinear parabolic capacity} defined in \Cref{capaKKKP}, will henceforth be referred to simply as the \emph{parabolic capacity}. 

Next is a definition of inner and outer capacity which is classical in potential theory, see e.g. the classical book \cite{ChoqI}.
\begin{definition}
	Let $C(\cdot): 2^{\R^{n+1}} \to [0,\infty]$ be a set function, we define the inner capacity of an arbitrary set $E \subset \R^{n+1}$ as 
	\begin{equation}
		\nonumber \label{} \underline{C}(E) := \sup \big \{C(K): K \subset E, \text{$K$ compact}\big \}\,. 
	\end{equation}
	and the outer capacity as 
	\begin{equation}
		\nonumber \label{} \overline{C}(E) := \inf\big \{\underline{C}(U): U \supset E, \text{$U$ open}\big \}\,. 
	\end{equation}
\end{definition}
There are a few definitions of capacitability, which we will next briefly discuss. We start with Choquet's definition of capacitability, see for example \cite[Sec 6.4]{Choq}. 
\begin{definition}
	Let $C(\cdot): 2^{\R^{n+1}} \to [0,\infty]$ be a set function, we call a set $E \subset \R^{n+1}$ capacitable with respect to $C(\cdot)$ if the following holds 
	\begin{equation}
		\nonumber \label{} C(E) = \underline{C}(E)\,. 
	\end{equation}
\end{definition}
Note that in \cite{KKKP} it was proved that all Borel sets where capacitable with respect to the parabolic capacity. However what we need is that sets are capacitable with respect to the outer capacity of the parabolic capacity, that is, with respect to $\overline{\capacity}$. Thus we reserve the word ``capacitable'' for this case, which is the most important for us. 
\begin{definition}
	\label{defnonlincap} We call a set $E \Subset \Omega_\infty$ \emph{capacitable} if it is capacitable with respect to $\overline{\capacity}(\cdot,\Omega_\infty)$. 
\end{definition}
\begin{remark}
	\label{simplifyingcapacitability} 
	Let $K \subset \Omega_\infty$ be a compact set, then since open sets are capacitable with respect to the parabolic capacity and since the parabolic capacity is stable with respect to decreasing sequences of compact sets we can do the calculation
	\begin{align} \label{compact capacitability}
		\overline {\capacity}(K,\Omega_\infty) &=  \inf\{\underline {\capacity}(U,\Omega_\infty): U \supset K, \text{$U$ open}\} \notag \\
		&= \inf\{\capacity(U,\Omega_\infty): U \supset K, \text{$U$ open}\} = \capacity(K,\Omega_\infty)\,.
	\end{align}
	It now follows that 
	\begin{equation}
		\nonumber \label{} \underline {\Big[ \overline{\capacity} \Big ]}(E,\Omega_\infty) = \underline {\capacity}(E,\Omega_\infty) = \capacity(E,\Omega_\infty)\, 
	\end{equation}
	must hold for \textit{all} Borel sets (recall that the capacity is inner regular). Hence the capacitability condition of \Cref{defnonlincap} can be rewritten as 
	\begin{equation}
		\label{simpl} \inf\{\capacity(U,\Omega_\infty): U \supset E, \text{$U$ open}\} = \capacity(E,\Omega_\infty)\,.
	\end{equation}
	for a Borel set $E$.
	This is the relation that is needed for the rest of the paper, and thus when we reference \Cref{defnonlincap} we will be using \cref{simpl}. 
\end{remark}
\begin{remark}
	It is an open question whether every Borel set is capacitable in the sense of \Cref{defnonlincap}. From \Cref{simplifyingcapacitability} we see that according to the theory developed by Choquet \cite[Theorem 9.7]{ChoqI}, it would be enough to show strong subadditivity of the capacity. 
\end{remark}

The parabolic capacity is related with the two closely connected capacities: variational parabolic capacity and parabolic energy capacity. In order to define them, we need some notation. The relevant parabolic function space is
\begin{equation*}
	\mathcal{V}(\Omega_\infty) = L^{p}(0, \infty ; W_{0}^{1,p} (\Omega))\,,
\end{equation*}
we denote its dual by $\V'(\Omega_\infty)$. First define the norms 
\begin{align*}
	\norm{ u }_{\mathcal{V}(\Omega_\infty) } &= \left(\int_{\Omega_\infty} \abs{\nabla u }^{p} \dx \dt \right)^{1/p}, \\
	\norm{ v }_{\mathcal{V'}(\Omega_\infty) } &= \sup_{\phi \in C_0^\infty(\Omega \times \R)\,,\norm{ \phi }_{\mathcal{V}(\Omega_\infty) } \leq 1} \left | \int_{\Omega_\infty} v \phi \dx \dt \right |, 
\end{align*}
and then define the space $\W(\Omega_\infty) = \big \{u: u \in \V(\Omega_\infty) \text{ and } u_t \in \V'(\Omega_\infty)\big \}$. Using the quantity 
\begin{align*}
	\norm{ u }_{\mathcal{W}(\Omega_\infty) } &= \norm{ u }_{\mathcal{V}(\Omega_\infty) }^{p} + \norm{ u_t }_{\mathcal{V'}(\Omega_\infty) }^{p'}, 
\end{align*}
we can finally define the variational parabolic capacity
\begin{equation*}
	\varcappre(K,\Omega_\infty) = \inf \big\{ \norm{\varphi}_{\mathcal{W}} : \varphi \in C_0^{\infty}(\Omega \times \mathbb{R}), \varphi \geq 1_{K} \big \} 
\end{equation*}
for a compact set $K \subset \Omega_\infty$. The definition is extended to more general sets in the usual way. Namely, for each Borel set $B \Subset \Omega_\infty$ we let 
\begin{equation}
	\nonumber \label{} \varcap(B,\Omega_\infty) := \overline{\varcappre}(B,\Omega_\infty). 
\end{equation}
\begin{remark}
	\label{varcapextension} Note that this definition makes sense since the variational parabolic capacity is stable with respect to decreasing sequences of compact sets, see \cite[Lemma 3.1]{KKKP}. That is, for compact sets $K \subset \Omega_\infty$ 
	\begin{equation}
		\nonumber \label{} \varcappre(K,\Omega_\infty) = \varcap(K,\Omega_\infty) 
	\end{equation}
	and for open sets $U \Subset \Omega_\infty$ 
	\begin{equation}
		\nonumber \label{} \underline{\varcappre}(U,\Omega_\infty) = \varcap(U,\Omega_\infty)\,. 
	\end{equation}
\end{remark}

We next define the parabolic energy capacity but first we need to define the energy
\begin{equation*}
	\energy[\infty]{u} = \sup_{0<t<\infty} \int_{\Omega} u(x,t)^{2} \dx + \int_{\Omega_\infty} \abs{\nabla u}^{p} \dx \dt\,. 
\end{equation*}
The parabolic energy capacity of a compact set $K \subset \Omega_\infty$ is then defined as
\begin{equation*}
	\encap(K, \Omega_\infty) = \inf \big \{ \energy[\infty]{v} : v \geq 1_{K}, \ v \trm{ superparabolic}, v \in \mathcal{V}(\Omega_\infty) \big \}\,.
\end{equation*}

There is a relation between the three capacities, which we formulate in the following proposition. 
\begin{proposition}
	[\cite{AKP}] \label{lemma_zeroset} Let $K \subset \Omega_\infty$ be a compact set consisting of a finite union of space-time cylinders. Then
	\begin{equation*} 
		\capacity(K,\Omega_\infty)\approx \encap(K,\Omega_\infty)\approx \varcap(K,\Omega_\infty)\,. 
	\end{equation*}
	Moreover for a general compact set $K \subset \Omega_\infty$ we have
	\begin{equation*} 
		\capacity(K,\Omega_\infty)\approx \varcap(K,\Omega_\infty)\,. 
	\end{equation*}
	In particular, compact sets with vanishing capacity are the same for the parabolic capacity and the variational parabolic capacity. 
\end{proposition}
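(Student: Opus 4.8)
The plan is to establish, for a finite union of closed space-time cylinders $K$, the cyclic chain
\[
\capacity(K,\Omega_\infty)\lesssim\varcap(K,\Omega_\infty)\lesssim\encap(K,\Omega_\infty)\lesssim\capacity(K,\Omega_\infty),
\]
which forces the three quantities to be comparable, and then to obtain the general compact case from the first inequality (valid for every compact set) together with an approximation of $K$ from outside by finite unions of cylinders. The engine behind two of the three inequalities is the a priori estimate for measure data: if $\mu$ is admissible in \eqref{capaKKKP}, so that $\supp\mu\subset K$ and $0\le u_\mu\le1$, then testing the equation for $u_\mu$ against $u_\mu 1_{(0,\tau)}$ (using Steklov averages to legitimise the time term, which is permissible since $u_\mu$ is bounded) and using $0\le u_\mu\le1$, $\mu\ge0$, gives
\[
\tfrac12\norm{u_\mu(\cdot,\tau)}_{L^2(\Omega)}^2+\int_0^\tau\!\!\int_\Omega\abs{\grad u_\mu}^p\dx\dt\le\mu(\Omega_\infty)\qquad\text{for all }\tau,
\]
hence $\norm{u_\mu}_{\V(\Omega_\infty)}^p\le\mu(\Omega_\infty)$ and $\sup_\tau\norm{u_\mu(\cdot,\tau)}_{L^2(\Omega)}^2\le2\mu(\Omega_\infty)$.

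For $\capacity(K)\lesssim\varcap(K)$ I would take any $\varphi\in C_0^\infty(\Omega\times\R)$ with $\varphi\ge1_K$ and any admissible $\mu$, and use $\varphi\ge1$ on $\supp\mu$ to write
\[
\mu(\Omega_\infty)\le\int_{\Omega_\infty}\varphi\,d\mu=-\int_{\Omega_\infty}u_\mu\,\partial_t\varphi\dx\dt+\int_{\Omega_\infty}\abs{\grad u_\mu}^{p-2}\grad u_\mu\cdot\grad\varphi\dx\dt,
\]
the boundary terms in the time integration by parts vanishing because $u_\mu$ has zero initial trace and $\varphi$ is compactly supported in time. By Hölder's inequality the right-hand side is at most $\norm{u_\mu}_{\V(\Omega_\infty)}\norm{\partial_t\varphi}_{\V'(\Omega_\infty)}+\norm{u_\mu}_{\V(\Omega_\infty)}^{p-1}\norm{\varphi}_{\V(\Omega_\infty)}$; inserting the a priori bounds and absorbing the resulting powers of $\mu(\Omega_\infty)$ by Young's inequality yields $\mu(\Omega_\infty)\lesssim\norm{\partial_t\varphi}_{\V'(\Omega_\infty)}^{p'}+\norm{\varphi}_{\V(\Omega_\infty)}^p=\norm{\varphi}_{\W(\Omega_\infty)}$. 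Taking the supremum over $\mu$ and the infimum over $\varphi$ gives the inequality, and no cylinder structure was used. For $\encap(K)\lesssim\capacity(K)$ I would use the balayage $\hat R_K$ of $1$ relative to $K$ as a competitor in the definition of $\encap$: it is superparabolic with $0\le\hat R_K\le1$ and, $K$ being a finite union of closed cylinders, $\hat R_K\ge1_K$ (the réduite equals $1$ on such a $K$ and no exceptional polar subset arises); being bounded it is a weak supersolution and hence carries a Riesz measure $\nu_K$ with $\supp\nu_K\subset K$ and $\nu_K(\Omega_\infty)=\capacity(K)$ by \cite{KKKP}. Testing $\partial_t\hat R_K-\dive(\abs{\grad\hat R_K}^{p-2}\grad\hat R_K)=\nu_K$ against $\hat R_K 1_{(0,\tau)}$ and using $\hat R_K\le1$ gives $\energy[\infty]{\hat R_K}\lesssim\nu_K(\Omega_\infty)=\capacity(K)$, so $\encap(K)\le\energy[\infty]{\hat R_K}\lesssim\capacity(K)$.

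The remaining inequality $\varcap(K)\lesssim\encap(K)$ is the hard one, and the only place where I expect the cylinder hypothesis to be genuinely needed. Given a superparabolic $v\ge1_K$ with $\energy[\infty]{v}<\infty$, one must manufacture a smooth $\varphi\ge1_K$ with $\norm{\varphi}_{\W(\Omega_\infty)}\lesssim\energy[\infty]{v}$. Using the quantitative subadditivity of both $\varcap$ and $\encap$ one reduces to $K$ a single closed cylinder. A natural route is to truncate, $w=\min\{2(v-\tfrac12)_+,1\}$, so that $w=1$ on the open neighbourhood $\{v>\tfrac34\}$ of $K$ and $w=0$ where $v\le\tfrac12$, with $\norm{w}_{\V(\Omega_\infty)}\lesssim\norm{v}_{\V(\Omega_\infty)}$ and $\sup_t\norm{w(\cdot,t)}_{L^2(\Omega)}\lesssim\sup_t\norm{v(\cdot,t)}_{L^2(\Omega)}$, and then to mollify $w$ in space-time at a scale below $\mathrm{dist}(K,\{v\le\tfrac34\})$ to obtain a smooth $\varphi\ge1_K$ with $\norm{\varphi}_{\V(\Omega_\infty)}\lesssim\norm{w}_{\V(\Omega_\infty)}$. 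The delicate point — the real content of the proposition — is the bound on $\norm{\partial_t\varphi}_{\V'(\Omega_\infty)}$: it must be extracted from the finiteness of the energy by using that $v$ is $p$-parabolic on $\{0<w<1\}$, so that there $\partial_t v=\dive(\abs{\grad v}^{p-2}\grad v)$ lies locally in $\V'$ with norm governed by $\norm{\grad v}_{L^p}$, together with the fact that for a cylinder the capacitary measure is a surface-type measure for which the relevant trace and duality estimates are available. I expect this to be the main obstacle, and it is exactly the step with no counterpart for a general compact $K$, which is why the three-term equivalence is stated only for finite unions of cylinders.

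Finally, for a general compact $K$ I would cover it by finitely many small closed cylinders and intersect a decreasing sequence of such covers to obtain finite unions of cylinders $K_j\downarrow K$. By monotonicity $\varcap(K)\le\varcap(K_j)$; by the cylinder case $\varcap(K_j)\approx\capacity(K_j)$; and by the stability of the parabolic capacity under decreasing sequences of compact sets recorded in \Cref{simplifyingcapacitability}, $\capacity(K_j)\downarrow\capacity(K)$. Hence $\varcap(K)\lesssim\capacity(K)$, which combined with $\capacity(K)\lesssim\varcap(K)$ from the second paragraph gives $\capacity(K)\approx\varcap(K)$ for every compact $K$. The concluding statement about vanishing capacity is then immediate.
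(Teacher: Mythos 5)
This proposition is not proved in the paper at all --- it is imported from \cite{AKP} as a black box --- so there is no in-text argument to compare against; I can only assess your proposal on its own terms. Your overall architecture (the duality estimate $\capacity(K)\lesssim\varcap(K)$ obtained by testing the measure-data equation for $u_\mu$ against an admissible $\varphi$ and absorbing with Young's inequality, followed by outer approximation of a general compact set by finite unions of cylinders together with the stability of $\capacity$ under decreasing compacts) is sound, and that first inequality is carried out correctly. But the proof has a fatal gap exactly where you flag it: the inequality $\varcap(K)\lesssim\encap(K)$, which you rightly call the real content of the proposition, is never established. The truncation-and-mollification step controls $\Vnorm[\infty]{\varphi}$, but the required bound on $\Vprimenorm[\infty]{\partial_t\varphi}$ is only gestured at, and since your treatment of general compact sets routes through the cylinder case, nothing in the proposition --- including the equivalence $\capacity\approx\varcap$ that the rest of the paper actually uses --- survives without it. Note also that \Cref{lemma_obsten} already gives you the \emph{reverse} inequality $\encap(K)\lesssim\varcap(K)$ essentially for free (apply it to an almost-minimizing $\psi\ge 1_K$ and use that $\hat R^{\psi}\ge\psi\ge 1_K$ is a superparabolic competitor for $\encap$), so a more economical plan is the chain $\capacity\lesssim\varcap\lesssim\capacity$ with $\encap$ sandwiched by the two easy bounds; the single hard direction is then $\varcap(K)\lesssim\capacity(K)$ for cylinders, which in \cite{AKP} is extracted from the capacitary potential by an exponential time-mollification $u_\epsilon$ whose time derivative $(u-u_\epsilon)/\epsilon$ can be estimated in $\V'$ using the equation and the energy bound.

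There is a second, independent error in your step $\encap(K)\lesssim\capacity(K)$: the claim that $\hat R_K\ge 1_K$ for a finite union of closed cylinders, with ``no exceptional polar subset arising''. For $K=\overline B\times[t_1,t_2]$ the r\'eduite is essentially zero for $t<t_1$ (an upward jump in time is a supersolution, so nothing forces admissible competitors to be large before the cylinder starts), and since the lower semicontinuous regularization of a supersolution coincides with its essential liminf from earlier times, $\hat R_K$ drops below $1$ on the bottom face $\overline B\times\{t_1\}\subset K$. This is precisely the phenomenon, discussed in \Cref{s.polar}, that gives time-slices $\overline B\times\{t_1\}$ positive capacity, and it makes $\hat R_K$ inadmissible for $\encap(K)$ as defined. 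The step is repairable --- run the argument for cylinders enlarged downward in time by $\delta$ and let $\delta\to0$ using the stability of $\capacity$ under decreasing compacts --- but as written the competitor you feed into the energy capacity does not lie above $1_K$.
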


We are now ready to provide some examples of capacitable sets.
\begin{lemma}
	\label{lemma_capacitablesets} The following sets are capacitable: 
	\begin{enumerate}[label=(\arabic*)]
		\item \label{cap1} compact sets $K \subset \Omega_\infty$, 
		\item \label{cap2} open sets $U \Subset \Omega_\infty$, and
		\item \label{cap3} Borel sets $B \Subset \Omega_\infty$ for which $\varcap(B,\Omega_\infty)=0$. 
	\end{enumerate}
\end{lemma}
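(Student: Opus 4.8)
The plan is to verify, for each of the three classes, the capacitability condition in the form \eqref{simpl}, i.e. that $\inf\{\capacity(V,\Omega_\infty):V\supset E\text{ open}\}=\capacity(E,\Omega_\infty)$, the right-hand side already coinciding with $\underline{\capacity}(E,\Omega_\infty)$ by the inner regularity of the parabolic capacity recorded in \Cref{simplifyingcapacitability}. The statements for compact and open sets are essentially formal. For a compact $K\subset\Omega_\infty$ the identity is exactly the computation carried out in \Cref{simplifyingcapacitability}. For an open $U\Subset\Omega_\infty$ the set $U$ itself is admissible in the infimum on the left, so that infimum is at most $\capacity(U,\Omega_\infty)$, while the opposite inequality is the monotonicity of $\capacity$, which is immediate from \eqref{capaKKKP} since a measure supported in a set is supported in every larger set. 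Throughout I will also use, without comment, the monotonicity of $\varcap$: since $\varcap=\overline{\varcappre}$ and a larger set admits fewer open neighborhoods, $A\subset B$ implies $\varcap(A,\Omega_\infty)\le\varcap(B,\Omega_\infty)$.

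The substantive case is the Borel set $B\Subset\Omega_\infty$ with $\varcap(B,\Omega_\infty)=0$. First I would produce good open neighborhoods: since $\varcap(B,\Omega_\infty)=\overline{\varcappre}(B,\Omega_\infty)=0$, unwinding the definition of the outer capacity and using the identity $\underline{\varcappre}(V,\Omega_\infty)=\varcap(V,\Omega_\infty)$ for open $V$ from \Cref{varcapextension} (and intersecting with a fixed $W$ satisfying $B\subset W\Subset\Omega_\infty$), one obtains open sets $V_j$ with $B\subset V_j\Subset\Omega_\infty$ and $\varcap(V_j,\Omega_\infty)\to0$. Next I would transfer this to the parabolic capacity. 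This is the one step that is not purely formal, because \Cref{lemma_zeroset} compares $\capacity$ and $\varcap$ only for compact sets, whereas \eqref{simpl} asks for control of $\capacity$ on the open sets $V_j$; the remedy is to approximate $V_j$ from inside by compacts via inner regularity of $\capacity$:
\[
\capacity(V_j,\Omega_\infty)=\sup\{\capacity(K,\Omega_\infty):K\subset V_j\text{ compact}\}\lesssim\sup\{\varcap(K,\Omega_\infty):K\subset V_j\text{ compact}\}\le\varcap(V_j,\Omega_\infty),
\]
where the middle step is \Cref{lemma_zeroset} and the last is monotonicity of $\varcap$. Hence $\capacity(V_j,\Omega_\infty)\to0$.

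It remains to read off \eqref{simpl}. On one hand $\inf\{\capacity(V,\Omega_\infty):V\supset B\text{ open}\}\le\lim_j\capacity(V_j,\Omega_\infty)=0$; on the other hand $\capacity(B,\Omega_\infty)\le\capacity(V_j,\Omega_\infty)\to0$ by monotonicity, so $\capacity(B,\Omega_\infty)=0$ as well. Thus both sides of \eqref{simpl} vanish and $B$ is capacitable. The only genuine obstacle in the whole argument is the compact-versus-open mismatch in \Cref{lemma_zeroset}, resolved by the inner approximation displayed above; everything else is bookkeeping with monotonicity and with inner and outer capacities.
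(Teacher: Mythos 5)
Your proof is correct and follows essentially the same route as the paper: cases (1) and (2) are handled by the computation in \Cref{simplifyingcapacitability} and by monotonicity, and case (3) by passing from $\varcap(B,\Omega_\infty)=0$ to small open neighborhoods and transferring the smallness to $\capacity$ via the compact-set comparison of \Cref{lemma_zeroset}. The only difference is that you spell out the compact-versus-open transfer (inner approximation of open sets by compacts on both sides), which the paper compresses into the phrase ``and the definition of the involved quantities''; this is a welcome clarification but not a different argument.
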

\begin{proof}
	First note that \Cref{cap1} was proved in \Cref{compact capacitability} and	\cref{cap2} is immediate from \Cref{simpl}.
	
	To prove \Cref{cap3}, it suffices to prove it for Borel sets $B$ which are neither open nor closed (compact). In this case by \Cref{varcapextension} we have 
	\begin{equation}
		\nonumber \label{} 0 = \inf \big \{ \varcap(U,\Omega_\infty): U \supset B, \text{ $U \Subset \Omega_\infty$ open} \big \}\,. 
	\end{equation}
	Now from \Cref{lemma_zeroset} and the definition of the involved quantities we get
	\begin{equation*} 
		\capacity(U,\Omega_\infty)\approx \varcap(U,\Omega_\infty) 
	\end{equation*}
	which implies 
	\begin{equation}
		\nonumber \label{} \capacity(B,\Omega_\infty) \leq \inf \big \{ \capacity(U,\Omega_\infty): U \supset B, \text{ $U \Subset \Omega_\infty$ open} \big \} = 0\,. 
	\end{equation}
	Hence $B$ is capacitable by \Cref{defnonlincap,simplifyingcapacitability}. 
\end{proof}
The following simple \namecref{corollar_generalcomparison} will be useful. 
\begin{corollary}
	\label{corollar_generalcomparison} If $B \Subset \Omega_\infty$ is a Borel set, then 
	\begin{equation}
		\nonumber \label{} \capacity(B,\Omega_\infty) \lesssim \varcap(B,\Omega_\infty)\,. 
	\end{equation}
	Furthermore if $B \Subset \Omega_\infty$ is a capacitable Borel set, then 
	\begin{equation}
		\nonumber \label{} \capacity(B,\Omega_\infty) \approx \varcap(B,\Omega_\infty)\,. 
	\end{equation}
\end{corollary}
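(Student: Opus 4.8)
The plan is to prove both assertions by combining the inner-regularity built into both capacities with the comparison on compact sets supplied by \Cref{lemma_zeroset}. For the first inequality, let $B \Subset \Omega_\infty$ be an arbitrary Borel set. Both $\capacity(\cdot,\Omega_\infty)$ and $\varcap(\cdot,\Omega_\infty)$ are inner regular on Borel sets: $\capacity$ by the discussion after \Cref{defnonlincap} (it equals its own inner capacity, see \Cref{simplifyingcapacitability}), and $\varcap$ by \Cref{varcapextension}. So I would write
\begin{equation*}
	\capacity(B,\Omega_\infty) = \sup\big\{\capacity(K,\Omega_\infty) : K \subset B,\ K \text{ compact}\big\},
\end{equation*}
and for each such compact $K$ apply the second relation of \Cref{lemma_zeroset}, namely $\capacity(K,\Omega_\infty) \approx \varcap(K,\Omega_\infty)$, to bound $\capacity(K,\Omega_\infty) \lesssim \varcap(K,\Omega_\infty) \leq \varcap(B,\Omega_\infty)$, the last step by monotonicity of $\varcap$ under inclusion of compact subsets (again \Cref{varcapextension}, since $\varcap$ on $B$ dominates $\varcappre$ of any compact $K\subset B$). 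Taking the supremum over $K$ gives $\capacity(B,\Omega_\infty) \lesssim \varcap(B,\Omega_\infty)$, with the constant being the $c(n,p)$ from \Cref{lemma_zeroset}.

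For the second assertion, suppose in addition that $B$ is capacitable, i.e. \cref{simpl} holds:
\begin{equation*}
	\capacity(B,\Omega_\infty) = \inf\big\{\capacity(U,\Omega_\infty) : U \supset B,\ U \Subset \Omega_\infty \text{ open}\big\}.
\end{equation*}
The first inequality already gives $\capacity(B,\Omega_\infty) \lesssim \varcap(B,\Omega_\infty)$, so it remains to show the reverse direction $\varcap(B,\Omega_\infty) \lesssim \capacity(B,\Omega_\infty)$. Here I would use the outer approach: for any open $U$ with $B \subset U \Subset \Omega_\infty$, exhaust $U$ from inside by compact sets $K_j \nearrow U$ and use inner regularity of $\varcap$ on the open set $U$ (the statement $\underline{\varcappre}(U,\Omega_\infty) = \varcap(U,\Omega_\infty)$ from \Cref{varcapextension}) together with $\varcappre(K_j,\Omega_\infty) \approx \capacity(K_j,\Omega_\infty) \leq \capacity(U,\Omega_\infty)$ (the compact comparison of \Cref{lemma_zeroset} and monotonicity of $\capacity$). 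Passing to the supremum over $j$ yields $\varcap(U,\Omega_\infty) \lesssim \capacity(U,\Omega_\infty)$, and then taking the infimum over such $U$ and invoking \cref{simpl} gives
\begin{equation*}
	\varcap(B,\Omega_\infty) \leq \inf_U \varcap(U,\Omega_\infty) \lesssim \inf_U \capacity(U,\Omega_\infty) = \capacity(B,\Omega_\infty),
\end{equation*}
where the first step is monotonicity of $\varcap$ (note $\varcap(B,\Omega_\infty) \leq \varcap(U,\Omega_\infty)$ for every open $U \supset B$, which follows from the definition as an outer/inf over open supersets). Combining the two directions gives $\capacity(B,\Omega_\infty) \approx \varcap(B,\Omega_\infty)$.

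The only place that requires care — and which I expect to be the main obstacle — is justifying the inequality $\varcap(B,\Omega_\infty) \leq \inf\{\varcap(U,\Omega_\infty) : U \supset B \text{ open}\}$ and, symmetrically, that $\varcap$ behaves monotonically across the chain of compact, Borel, and open sets appearing above. Since $\varcap$ is defined as $\overline{\varcappre}$, i.e. as an infimum of $\underline{\varcappre}(U)$ over open $U \supset B$, monotonicity under inclusion and the inequality with the open-set infimum are essentially built in; but one must check that for a compact $K \subset B$ one genuinely has $\varcappre(K,\Omega_\infty) = \varcap(K,\Omega_\infty) \leq \varcap(B,\Omega_\infty)$, which again is \Cref{varcapextension} plus the fact that any open neighborhood of $B$ is an open neighborhood of $K$. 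All of this is bookkeeping with the definitions in the Preliminaries; no new analytic input beyond \Cref{lemma_zeroset} is needed, and the proof is short once the regularity statements are lined up correctly.
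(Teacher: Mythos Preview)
Your argument is correct and is precisely the natural derivation the paper has in mind: the corollary is stated without proof in the paper (introduced merely as ``The following simple corollary will be useful''), being an immediate consequence of \Cref{lemma_zeroset} together with the inner/outer regularity bookkeeping in \Cref{varcapextension} and \Cref{simplifyingcapacitability}. One minor remark: your opening sentence claims $\varcap$ is inner regular on all Borel sets, which is not asserted in \Cref{varcapextension} (only on open sets), but you never actually use this---in Part~1 you only need monotonicity of $\varcap$, and in Part~2 you correctly use inner regularity on the open set $U$---so the proof stands as written.
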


\subsection{Useful results} Here we state some useful results that we are going to apply repeatedly. From \cite{AKP} we get the following useful \namecref{lemma_obsten}.
\begin{proposition}
	\label{lemma_obsten} Let $\psi \in C_0^{\infty} ( \Omega \times \mathbb{R} )$. If $u$ solves the obstacle problem in $\Omega_\infty$ with $\psi$ as the obstacle, that is, $u = \hat R^{\psi}$, then $u \geq \psi$ is a continuous supersolution and
	\begin{equation*} 
		\energy[\infty]{u} \lesssim \norm{\psi}_{\mathcal{W}(\Omega_\infty)}. 
	\end{equation*}
\end{proposition}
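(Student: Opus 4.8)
The plan is to compare the obstacle-problem solution $u = \hat R^\psi$ with the (unique) solution $v$ of the measure-data problem \cref{measprob} whose source measure is the Riesz measure $\mu_u$ attached to $u$; since $\psi$ has compact support in $\Omega\times\mathbb R$, we may work on a fixed bounded cylinder $\Omega_T$ with $T$ large enough that $\operatorname{spt}\psi\subset\Omega\times(0,T)$, and then the energy estimate is really about what happens on $\Omega_T$ (outside the support of $\psi$ the rÃ©duite is simply $p$-caloric with zero lateral and initial data, so it decays and contributes nothing extra). First I would record the structure of $u$: by \Cref{thm-regularization} and the discussion around \cref{measprob}, $u$ is a continuous supersolution, $u\ge\psi$, $u$ is $p$-caloric in the open set $\{u>\psi\}$, and $u$ has zero boundary values on $\partial_p\Omega_\infty$. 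Hence $\mu_u$ is a nonnegative Radon measure supported on the contact set $\{u=\psi\}$.

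Next I would test the equation for $u$ against $u$ itself (after a Steklov-averaging regularization in time to make $\partial_t u$ legitimate, using that $u\in\V(\Omega_\infty)$ with zero initial/lateral trace). This gives the standard parabolic energy identity
\begin{equation*}
	\tfrac12\sup_{0<t<\infty}\int_\Omega u(x,t)^2\dx + \int_{\Omega_\infty}\abs{\nabla u}^p\dx\dt \;=\; \int_{\Omega_\infty} u\,\de\mu_u,
\end{equation*}
so that $\energy[\infty]{u}\lesssim \int_{\Omega_\infty} u\,\de\mu_u$. Because $\mu_u$ lives on $\{u=\psi\}$, on that set $u=\psi\le\norm{\psi}_{L^\infty}$, and $\psi\le\hat R^\psi=u$ everywhere, one gets $\int u\,\de\mu_u = \int \psi\,\de\mu_u$. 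It remains to bound $\int\psi\,\de\mu_u$ by $\norm{\psi}_{\mathcal W(\Omega_\infty)}$. For this I would use $\psi$ itself (which is an admissible competitor in the sense that $\psi\in C_0^\infty(\Omega\times\mathbb R)$) as a test function in the distributional formulation of \cref{measprob}:
\begin{equation*}
	\int_{\Omega_\infty}\psi\,\de\mu_u \;=\; \int_{\Omega_\infty}\big(\abs{\nabla u}^{p-2}\nabla u\cdot\nabla\psi + u\,\partial_t\psi\big)\dx\dt
	\;\le\; \norm{\nabla u}_{\V}^{p-1}\norm{\nabla\psi}_{\V} + \norm{u}_{\V}\,\norm{\partial_t\psi}_{\V'},
\end{equation*}
using Hölder in space-time for the first term and the $\V$--$\V'$ duality for the second (the time-derivative term of $u$ against $\psi$ has been moved onto $\psi$ by integration by parts, which is where the zero initial/lateral trace of $u$ is used again). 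By Young's inequality each product is absorbed: $\norm{\nabla u}_\V^{p-1}\norm{\nabla\psi}_\V \le \varepsilon\norm{\nabla u}_\V^p + C_\varepsilon\norm{\nabla\psi}_\V^p$ and $\norm{u}_\V\norm{\partial_t\psi}_{\V'}$ is controlled by $\energy[\infty]{u}$ times $\norm{\partial_t\psi}_{\V'}$ — more precisely one first needs $\norm{u}_\V\lesssim\energy[\infty]{u}^{1/p}$, so this term is $\lesssim \varepsilon\,\energy[\infty]{u} + C_\varepsilon\norm{\partial_t\psi}_{\V'}^{p'}$. Choosing $\varepsilon$ small to absorb the $\varepsilon\,\energy[\infty]{u}$ terms into the left-hand side yields $\energy[\infty]{u}\lesssim \norm{\nabla\psi}_\V^p + \norm{\partial_t\psi}_{\V'}^{p'} = \norm{\psi}_{\mathcal W(\Omega_\infty)}$, as desired.

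The main obstacle, and the place that needs care rather than cleverness, is the time-regularity bookkeeping: $\partial_t u$ is a priori only a measure, not an element of $\V'$, so the energy identity and the integration by parts passing $\partial_t u$ onto $\psi$ must be carried out on Steklov averages $u_h$ (or via the mollification/comparison scheme of \cite{KKKP,AKP}) and then passed to the limit, checking that the boundary-in-time terms vanish because $u$ has vanishing initial trace in the $L^2$ sense and $\psi$ is compactly supported in time. One also must justify that $u\in\V(\Omega_\infty)$ in the first place — but this is exactly part of the conclusion that $u$ is a supersolution with finite energy, which follows from \Cref{lemma_obsten}'s hypotheses together with the comparison $0\le u\le\norm{\psi}_{L^\infty}$ on $\{u>\psi\}$ extended by the maximum principle, so the argument is not circular provided one sets up the a priori bound on a truncation first and then removes it. Everything else is routine Hölder/Young manipulation.
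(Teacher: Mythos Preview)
The paper does not give a proof of this proposition; it is quoted from \cite{AKP} (see the sentence immediately preceding the statement), so there is no in-paper argument to compare against.

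On its own merits your outline is the natural one and is correct in substance: obtain an energy identity by testing the Riesz-measure equation for $u$ with $u$ itself, use that $\mu_u$ is supported on the contact set to replace $\int u\,d\mu_u$ by $\int\psi\,d\mu_u$, then test with $\psi$ and absorb via Young. One minor slip: the weak formulation gives $\int\psi\,d\mu_u=\int|\nabla u|^{p-2}\nabla u\cdot\nabla\psi-\int u\,\partial_t\psi$ (minus, not plus), though this is irrelevant for the estimate. Also note that a slightly cleaner variant is to test with $u-\psi$ rather than with $u$ and then with $\psi$ separately: since $u-\psi$ vanishes on $\supp\mu_u$ the measure term drops out entirely, and the time term becomes $\tfrac12\partial_t\int(u-\psi)^2+\int(u-\psi)\partial_t\psi$, which is easier to handle because $\partial_t\psi$ is smooth.

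The only point that needs more than a gesture is the one you flag at the end. The a~priori membership $u\in\V(\Omega_\infty)$ is not actually problematic---$u$ is a \emph{bounded} continuous supersolution with zero lateral values, so standard Caccioppoli estimates already give $u\in L^p(0,T;W^{1,p}_0(\Omega))$ for each $T$---so the argument is not circular in that sense. What does require work is justifying the energy identity itself when $\partial_t u$ is only a measure. Steklov averaging alone is awkward here because the averaged equation carries the measure term as well; the standard route (and the one in \cite{AKP}) is to run the whole computation on an approximating family of obstacle problems for which $\partial_t u_\varepsilon\in\V'$, obtain the bound uniformly in $\varepsilon$, and pass to the limit by lower semicontinuity of the energy. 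Once that scaffolding is in place, your Young-inequality absorption goes through verbatim.
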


For the following two lemmas see \cite{KKKP} and the references therein. 
\begin{lemma}
	\label{lemma_supcon} Let $\{u_i\}_{i}$ be a sequence of uniformly bounded supersolutions such that $u_i \to u$ almost everywhere in $\Omega_\infty$. Then $u$ is a weak supersolution and $\mu_{u_i} \to \mu_u $ weakly. 
\end{lemma}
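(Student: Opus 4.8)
The statement to prove is \Cref{lemma_supcon}: if $\{u_i\}$ is a sequence of uniformly bounded supersolutions converging a.e. to $u$, then $u$ is a weak supersolution and $\mu_{u_i} \to \mu_u$ weakly.

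\medskip

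\textbf{Proof plan.} The plan is to proceed in three stages: first upgrade the almost-everywhere convergence to convergence strong enough to pass to the limit in the weak formulation, then identify the limit of the nonlinear flux terms, and finally deduce the measure convergence by testing the equation against smooth functions.

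First I would exploit the uniform bound. Since the $u_i$ are uniformly bounded, say $|u_i| \le M$, and each is a supersolution, Caccioppoli-type estimates (standard energy estimates for supersolutions of \cref{theeq}) give a uniform bound on $\grad u_i$ in $L^p_{\mathrm{loc}}(\Omega_\infty)$ on every cylinder $U_{t_1,t_2}\Subset\Omega_\infty$. Combined with the fact that $\partial_t u_i = \mu_{u_i} - \dive(|\grad u_i|^{p-2}\grad u_i)$ is bounded in a suitable negative-order space (the measures $\mu_{u_i}$ having uniformly bounded mass on compact subsets, again by Caccioppoli together with the uniform sup bound), an Aubin--Lions type compactness argument upgrades $u_i \to u$ to strong convergence in $L^p_{\mathrm{loc}}$ and weak convergence $\grad u_i \rightharpoonup \grad u$ in $L^p_{\mathrm{loc}}$. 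In particular $u \in L^p_{\mathrm{loc}}(t_1,t_2;W^{1,p}(U))$ whenever $U_{t_1,t_2}\Subset\Omega_\infty$, so $u$ lies in the right function space.

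The main obstacle is the nonlinearity: weak convergence of the gradients is not enough to conclude $|\grad u_i|^{p-2}\grad u_i \rightharpoonup |\grad u|^{p-2}\grad u$. Here I would use the monotonicity of the $p$-Laplacian operator in the spirit of the Minty--Browder trick, adapted to the parabolic setting: test the difference of the equations for $u_i$ with $(u_i - u)$ times a cutoff, use the uniform mass bound on $\mu_{u_i}$ and the strong $L^p_{\mathrm{loc}}$ convergence of $u_i$ to $u$ to show that $\int (|\grad u_i|^{p-2}\grad u_i - |\grad u|^{p-2}\grad u)\cdot(\grad u_i - \grad u)\to 0$ on compact sets, and then conclude by the strict monotonicity of $\xi\mapsto|\xi|^{p-2}\xi$ that $\grad u_i \to \grad u$ strongly in $L^q_{\mathrm{loc}}$ for $q<p$, hence $|\grad u_i|^{p-2}\grad u_i \to |\grad u|^{p-2}\grad u$ in $L^1_{\mathrm{loc}}$. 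Once this is in hand, passing to the limit in
\begin{equation*}
	\int_{\Omega_\infty}\big(|\grad u_i|^{p-2}\grad u_i\cdot\grad\varphi - u_i\,\partial_t\varphi\big)\dx\dt \ge 0, \qquad 0\le\varphi\in C_0^\infty(\Omega_\infty),
\end{equation*}
shows that $u$ is a weak supersolution.

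Finally, for the measure convergence: by the representation \cref{measprob} each $u_i$ has an associated Radon measure $\mu_{u_i}$, and for any $\varphi\in C_0^\infty(\Omega_\infty)$ we have $\int\varphi\,\de\mu_{u_i} = \int(|\grad u_i|^{p-2}\grad u_i\cdot\grad\varphi - u_i\,\partial_t\varphi)\dx\dt$. The right-hand side converges to $\int(|\grad u|^{p-2}\grad u\cdot\grad\varphi - u\,\partial_t\varphi)\dx\dt = \int\varphi\,\de\mu_u$ by the strong flux convergence and strong $L^1_{\mathrm{loc}}$ convergence of $u_i$ established above; since the $\mu_{u_i}$ have uniformly bounded mass on compact sets and $C_0^\infty$ is dense, this gives $\mu_{u_i}\rightharpoonup\mu_u$ weakly. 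I expect the Minty-type monotonicity step, together with controlling the boundary-in-time terms from $\partial_t$ when localizing, to be the delicate point; everything else is the standard machinery for \cref{theeq}.
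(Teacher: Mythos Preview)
The paper does not prove this lemma: it is introduced with ``For the following two lemmas see \cite{KKKP} and the references therein,'' so there is no in-paper proof to compare against. Your sketch is essentially the standard argument one finds in that reference and its predecessors (uniform Caccioppoli from the sup bound, compactness, a monotonicity/Boccardo--Murat step for pointwise gradient convergence, then pass to the limit in the distributional formulation), so in spirit you are aligned with the cited source.

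One point to tighten: in the Minty step you propose to ``test the difference of the equations for $u_i$ with $(u_i-u)$ times a cutoff,'' but at that stage you do not yet have an equation for $u$. The way this is actually carried out in the measure-data literature is either a Cauchy argument in $i,j$ (test the distributional equation for $u_i$ with a truncation of $(u_i-u_j)\eta$ and use monotonicity), or a direct appeal to Boccardo--Murat type compactness, which only needs $\nabla u_i$ bounded in $L^p_{\mathrm{loc}}$ and $\partial_t u_i$ bounded in $L^1_{\mathrm{loc}} + (\V')_{\mathrm{loc}}$; both bounds follow from Caccioppoli plus the uniform sup bound, so this is a minor rearrangement of your plan rather than a genuine gap. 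The time-derivative regularization issue you flag is indeed the technical crux, and it is handled in \cite{KKKP} via Steklov averaging, as you anticipate.
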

\begin{lemma}
	\label{lemma_compa} Let $K \subset \Omega_\infty$ be a compact set. Take $\lambda > 0$, then
	\begin{equation*}
		\mu_{R_K} (\Omega_\infty) \approx_{\lambda} \mu_{R^{\lambda}_K}(\Omega_\infty)\,. 
	\end{equation*}
\end{lemma}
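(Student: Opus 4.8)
The plan is to compare the two measures $\mu_{R_K}$ and $\mu_{R^\lambda_K}$ by exploiting the homogeneity-type behaviour of the $p$-parabolic operator together with the fact that the balayage of a compact set is, roughly, concentrated on $K$ (more precisely on a set where the reduced function is bounded above by $1$ resp. $\lambda$). First I would record the two elementary cases. If $\lambda \geq 1$, then $\lambda R_K$ is superparabolic, lies above $1_K$ scaled appropriately, and by minimality of the reduced function one gets $R^\lambda_K \leq \lambda R_K$; conversely $\lambda^{-1} R^\lambda_K \geq 1_K$ gives $R_K \leq \lambda^{-1} R^\lambda_K$, i.e.\ $R^\lambda_K \geq \lambda R_K$. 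Hence for $\lambda \geq 1$ one in fact has $R^\lambda_K = \lambda R_K$ on the nose — this is where the linearity of the comparison/obstacle construction at the level of the \emph{function} (not the equation) is used. The case $0<\lambda<1$ is symmetric after swapping the roles. So $R^\lambda_K = \lambda R_K$ always.

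Next I would translate the identity $R^\lambda_K = \lambda R_K$ into a relation between the Riesz measures. Writing $u = R_K$ and $u_\lambda = \lambda u$, and using that both are supersolutions vanishing on $\partial_p\Omega_\infty$ with associated Radon measures via \cref{measprob}, one computes distributionally
\begin{equation*}
	\mu_{u_\lambda} = \partial_t(\lambda u) - \dive\big(\abs{\grad(\lambda u)}^{p-2}\grad(\lambda u)\big) = \lambda\, \partial_t u - \lambda^{p-1}\dive\big(\abs{\grad u}^{p-2}\grad u\big)\,.
\end{equation*}
This is not simply a scalar multiple of $\mu_u$ because the elliptic part scales with exponent $p-1$ while the time derivative scales linearly. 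However, the support of $\mu_{R_K}$ is contained in (the closure of) $K$, where $R_K$ attains its "obstacle" behaviour, and on the complement of $K$ the function $R_K$ is $p$-parabolic so both $\partial_t u$ and $\dive(\abs{\grad u}^{p-2}\grad u)$ individually... — here one must be careful, since $\partial_t u$ and the divergence term need not vanish separately off $K$, only their difference does. The cleaner route is: test \cref{measprob} for $u_\lambda$ against $\varphi\in C_0^\infty$, split $\int \varphi\,\de\mu_{u_\lambda} = \lambda\int \varphi\,\partial_t u + \lambda^{p-1}\int \abs{\grad u}^{p-2}\grad u\cdot\grad\varphi$, and then use $\int\varphi\,\partial_t u = \int\varphi\,\de\mu_u - \int\abs{\grad u}^{p-2}\grad u\cdot\grad\varphi$ to get $\int\varphi\,\de\mu_{u_\lambda} = \lambda\int\varphi\,\de\mu_u + (\lambda^{p-1}-\lambda)\int\abs{\grad u}^{p-2}\grad u\cdot\grad\varphi$. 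Taking $\varphi\equiv 1$ near $K$ (legitimate since the measures are supported in the compact set $K\Subset\Omega_\infty$, so one can fix $\varphi\in C_0^\infty(\Omega_\infty)$ with $\varphi=1$ on a neighbourhood of $K$) kills the last gradient integral over that neighbourhood — but not globally.

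The main obstacle is therefore controlling that residual gradient term, i.e.\ showing $\int_{\Omega_\infty}\abs{\grad R_K}^{p-2}\grad R_K\cdot\grad\varphi\,\dx\dt$ is comparable to $\mu_{R_K}(\Omega_\infty)$ (with constants depending on $\lambda$, $n$, $p$). I would handle this by choosing $\varphi$ as a fixed cutoff depending only on $K$ and $\Omega_\infty$, bounding the gradient integral by $\energy[\infty]{R_K}^{(p-1)/p}\|\grad\varphi\|_{L^p}$ via Hölder, then invoking \Cref{lemma_obsten} (applied with a smooth obstacle $\psi$ with $\psi=1$ on $K$, $\psi=0$ outside a slightly larger set, noting $R_K \le \hat R^\psi$) to get $\energy[\infty]{R_K}\lesssim \|\psi\|_{\mathcal W(\Omega_\infty)}$, and finally relating this back to $\mu_{R_K}(\Omega_\infty)$ through the standard energy estimate $\energy[\infty]{u}\approx \mu_u(\Omega_\infty)$-type bound for reduced functions of compact sets (this is essentially the content behind \Cref{lemma_zeroset}, $\capacity(K)=\mu_{R_K}(\Omega_\infty)$ for $K$ compact, together with $\capacity(K)\approx\varcap(K)\approx\energy[\infty]{R_K}$-type comparisons from \cite{AKP}). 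Combining, $|\mu_{R^\lambda_K}(\Omega_\infty) - \lambda\mu_{R_K}(\Omega_\infty)| \le |\lambda^{p-1}-\lambda|\cdot C\,\mu_{R_K}(\Omega_\infty)$, which gives $\mu_{R^\lambda_K}(\Omega_\infty)\approx_\lambda \mu_{R_K}(\Omega_\infty)$ once one also produces the matching lower bound by the same argument run with $\varphi$ replaced appropriately, or simply by symmetry (apply the established inequality with $\lambda$ replaced by $1/\lambda$ and $K$, $R_K$ replaced by $R^\lambda_K$). I expect the bookkeeping of which cutoff is admissible — using crucially that $K$ is compactly contained in $\Omega_\infty$ so that a single $\varphi\in C_0^\infty(\Omega_\infty)$ works — to be the only genuinely delicate point; everything else is the scaling identity plus Hölder plus the cited energy comparisons.
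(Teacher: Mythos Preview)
The paper does not prove this lemma itself; it is quoted from \cite{KKKP} (see the sentence immediately preceding \Cref{lemma_supcon}). So there is no in-house argument to compare against. That said, your proposal has a genuine error at the very first step.

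You assert that $\lambda R_K$ is superparabolic, so that minimality forces $R^\lambda_K\le\lambda R_K$ and, symmetrically, $R^\lambda_K\ge\lambda R_K$, whence $R^\lambda_K=\lambda R_K$. This is false for $p\neq 2$. The class of superparabolic functions (equivalently, of lower-semicontinuous supersolutions) is \emph{not} closed under scalar multiplication: if $u$ solves $u_t-\dive(\abs{\nabla u}^{p-2}\nabla u)=0$ then
\[
\partial_t(\lambda u)-\dive\big(\abs{\nabla(\lambda u)}^{p-2}\nabla(\lambda u)\big)=(\lambda-\lambda^{p-1})\,u_t,
\]
which has no sign in general; the same computation shows $\lambda u$ need not be a supersolution when $u$ is. The comparison-principle definition (\Cref{def_superparab}) is not preserved either, since $\lambda^{-1}h$ is not a competitor when $h$ is $p$-parabolic. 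Your parenthetical about ``linearity of the comparison/obstacle construction at the level of the function (not the equation)'' does not rescue this: admissibility in $\mathcal S_\psi$ requires being a supersolution, and that is an equation-dependent, nonlinear condition.

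Since the identity $R^\lambda_K=\lambda R_K$ fails, the rest of the argument collapses: there is no reason for $\mu_{R^\lambda_K}$ to equal $\lambda\,\partial_t R_K-\lambda^{p-1}\dive(\abs{\nabla R_K}^{p-2}\nabla R_K)$, and the gradient-term bookkeeping you carry out is solving a problem that never arises. The argument in \cite{KKKP} proceeds instead through two-sided estimates between the total Riesz mass and the energy of a bounded nonnegative supersolution with zero lateral/initial data (test with $u$ itself for one direction, and with a function of the type $(\lambda-u)/\lambda$ for the other), combined with the elementary pointwise comparisons $R_K\le R^\lambda_K\le\lambda$ (for $\lambda\ge1$) and the analogous ones for $\lambda<1$. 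No scaling identity between $R_K$ and $R^\lambda_K$ is used or available.
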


\section{Polarity} \label{s.polar}

The peculiar nature of the potential theory of \cref{theeq} gives rise to superparabolic functions which do not have any finite measure as a source (see \cite{LiKi,KuLiPa}). In fact the superparabolic functions can be split into two very distinct classes, (see \cite{LiKi,KuLiPa}) 
\begin{align*}
	&\trm{Class $\mathfrak{B}$}  &u \in L_{loc}^{p-1+\frac{p}{n}-\epsilon} \quad  \trm{for each} \quad \epsilon > 0 \\
	&\trm{Class $\mathfrak{M}$}  &u \notin L_{loc}^{p-2+\epsilon} \quad  \trm{for each} \quad \epsilon > 0\,.
\end{align*}

With the above classes in mind we chose for technical reasons the following notion of polar sets.
\begin{definition} \label{defintpol}
	A Borel set $B \Subset \Omega_\infty$ is called an \emph{interior} polar set if there exists a superparabolic function $u$, such that for a large enough $\lambda_o > 0$ we have
	\begin{equation} \label{eqdefintpol}
		\big \{(x,t) \in \Omega_\infty: u(x,t) > \lambda_o \big \} \Subset \Omega_\infty
	\end{equation}
	and
	\begin{equation*}
		B \subset \big \{(x,t) \in \Omega_\infty: u(x,t) \equiv \infty \big \}\,.
	\end{equation*}
\end{definition}

\begin{remark}
	As was noted in \cite{KuLiPa} the assumption on $u$ in \Cref{defintpol} implies that $u \in \mathfrak{B}$.
\end{remark}

These interior polar sets will be treatable and we prove that:
\begin{enumerate}
	\item An interior polar set is a capacitable set of zero parabolic capacity, 
	\item A compactly contained, capacitable set of zero parabolic capacity is an interior polar set. 
\end{enumerate}
If we remove the assumption \Cref{eqdefintpol} we can include functions belonging to the class $\mathfrak{M}$, this would imply that the disc $B = B_{x_0} \times \{t_1\} \Subset \Omega_\infty$ is included (see \cite{KuLiPa}), but as was shown in \cite{AKP}, $B$ does not have zero parabolic capacity. The main reason for this is the fact that the functions in the class $\mathfrak{M}$ are ignored in the definition of parabolic capacity. 

\begin{remark}
	It should be noted that if $p=2$, then all compactly contained polar sets are interior polar sets.
\end{remark}

We will need the following theorem proved in \cite{KKKP}. 
\begin{theorem} \label{thm levelset bdd}
	Let $u \in \mathfrak{B}$ be superparabolic in $\Omega_\infty$ and $\lambda > 1$. Then there exists a constant $C > 1$, independent of $K$ and $\lambda$, such that 
	\begin{equation}
		\nonumber \label{} \capacity(\{u > \lambda\} \cap K,\Omega_\infty) \leq C \mu_{R^u_K}(\Omega_\infty)(\lambda^{1-p} + \lambda^{-1/(p-1)}) 
	\end{equation}
	for all compact sets $K \subset \Omega_\infty$. 
\end{theorem}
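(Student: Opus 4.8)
\emph{Proof plan.} Work directly from the definition of the parabolic capacity: it suffices to show
\begin{equation*}
	\mu(\Omega_\infty)\ \le\ C(p)\,\mu_{R^u_K}(\Omega_\infty)\bigl(\lambda^{1-p}+\lambda^{-1/(p-1)}\bigr)
\end{equation*}
for every nonnegative Radon measure $\mu$ with $\supp\mu\subset\{u>\lambda\}\cap K$ and $0\le u_\mu\le1$, and then take the supremum over such $\mu$. Since $\{u>\lambda\}\cap K=\{\min(u,j)>\lambda\}\cap K$ whenever $j>\lambda$, and $R^u_K$ is the increasing limit of the r\'eduites $R^{\min(u,j)}_K$ of the bounded obstacles $\min(u,j)1_K$ (with $\mu_{R^{\min(u,j)}_K}(\Omega_\infty)\to\mu_{R^u_K}(\Omega_\infty)$, all these measures being carried by the fixed compact set $\overline K$), I would first prove the estimate assuming $u$ bounded and recover the general case by letting $j\to\infty$ at the end.

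\textbf{The comparison function and its truncation.} Let $v:=\hat R^u_K$ be the balayage. By the obstacle-problem theory $v$ is a bounded supersolution with zero parabolic boundary values and $v\ge u$ quasi-everywhere on $K$; since an admissible $\mu$ charges no set of zero capacity, $v\ge\lambda$ holds $\mu$-a.e.\ on $\supp\mu$. Its Riesz measure is $\nu:=\mu_{R^u_K}$, carried near $K$, and — as $K\Subset\Omega_\infty$ and $v$ takes its zero boundary data continuously and decays as $t\to\infty$ — the set $\{v>\lambda\}$ is compactly contained in $\Omega_\infty$. Because $v$ may be close to its supremum I replace it by $v_\lambda:=\min(v,\lambda)$, again a bounded supersolution, which equals $\lambda$ $\mu$-a.e.\ on $\supp\mu$. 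Writing $v=v_\lambda+\rho$ with $\rho:=(v-\lambda)_+\ge0$, the gradients $\nabla v_\lambda$ and $\nabla\rho$ have disjoint supports a.e., so $|\nabla v|^{p-2}\nabla v=|\nabla v_\lambda|^{p-2}\nabla v_\lambda+|\nabla\rho|^{p-2}\nabla\rho$ a.e.\ and hence $\nu=\mu_{v_\lambda}+\mu_\rho$ with $\mu_\rho:=\rho_t-\dive(|\nabla\rho|^{p-2}\nabla\rho)$. As $\rho$ vanishes near the lateral boundary, for $t$ below the time-span of $K$, and for $t$ large, testing this identity against a cut-off equal to $1$ on a compact neighbourhood of $\overline K\cup\{v\ge\lambda\}$ yields $\mu_\rho(\Omega_\infty)=0$; thus
\begin{equation*}
	\mu_{v_\lambda}(\Omega_\infty)=\nu(\Omega_\infty)=\mu_{R^u_K}(\Omega_\infty),\qquad
	\int_{\Omega_\infty}|\nabla v_\lambda|^p\dx\dt\ \le\ \lambda\,\mu_{v_\lambda}(\Omega_\infty)=\lambda\,\mu_{R^u_K}(\Omega_\infty),
\end{equation*}
the inequality being the standard energy estimate for $v_\lambda$ (test its equation by itself, using $v_\lambda\le\lambda$ and vanishing initial values).

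\textbf{The energy pairing.} Now pair the two measure-data equations: use $v_\lambda$ as a test function in $\partial_tu_\mu-\dive(|\nabla u_\mu|^{p-2}\nabla u_\mu)=\mu$ and $u_\mu$ as a test function in $\partial_tv_\lambda-\dive(|\nabla v_\lambda|^{p-2}\nabla v_\lambda)=\mu_{v_\lambda}$ (Steklov averages handle the time derivatives; both functions vanish on $\partial_p\Omega_\infty$ and decay, so $\int_{\Omega_\infty}\partial_t(u_\mu v_\lambda)\dx\dt=0$). Adding the two resulting identities, dropping the nonnegative term $\int u_\mu\,d\mu_{v_\lambda}$, and using $v_\lambda\ge\lambda$ $\mu$-a.e.\ on $\supp\mu$,
\begin{equation*}
	\lambda\,\mu(\Omega_\infty)\ \le\ \int_{\Omega_\infty}v_\lambda\,d\mu\ \le\ \int_{\Omega_\infty}\Bigl(|\nabla u_\mu|^{p-2}\nabla u_\mu\cdot\nabla v_\lambda+|\nabla v_\lambda|^{p-2}\nabla v_\lambda\cdot\nabla u_\mu\Bigr)\dx\dt.
\end{equation*}
Applying Young's inequality to the two cross terms with the conjugate pair $(p,p')$ — once in each direction, calibrating the two parameters so that $|\nabla u_\mu|^p$ is loaded with the coefficient $\lambda/2$ — bounds the right-hand side by $\tfrac\lambda2\int_{\Omega_\infty}|\nabla u_\mu|^p\dx\dt+C(p)(\lambda^{1-p}+\lambda^{-1/(p-1)})\int_{\Omega_\infty}|\nabla v_\lambda|^p\dx\dt$; the two exponents $\lambda^{1-p}$ and $\lambda^{-1/(p-1)}=\lambda^{1-p'}$ are precisely the reciprocal powers produced by this calibration. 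Inserting $\int_{\Omega_\infty}|\nabla u_\mu|^p\dx\dt\le\mu(\Omega_\infty)$ (the energy estimate for $u_\mu$, from $u_\mu\le1$) and the bound on $\int_{\Omega_\infty}|\nabla v_\lambda|^p\dx\dt$ from the previous step, absorbing $\tfrac\lambda2\mu(\Omega_\infty)$ into the left, and finally letting $j\to\infty$, gives the desired inequality; the supremum over $\mu$ is the assertion.

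\textbf{Main difficulty.} The crux is the truncation bookkeeping: making sense of $R^u_K$ and of $v_\lambda=\min(\hat R^u_K,\lambda)$ as honest bounded supersolutions despite the possible unboundedness of $\hat R^u_K$, establishing the ``conservation of mass'' $\mu_{v_\lambda}(\Omega_\infty)\le C\,\mu_{R^u_K}(\Omega_\infty)$ from the disjoint-gradient identity together with the $u\mapsto\min(u,j)$ reduction and the convergence of the r\'eduite masses, and checking the localisation $\{v>\lambda\}\Subset\Omega_\infty$ that makes the boundary terms in $\mu_\rho(\Omega_\infty)=0$ disappear. Once the right pair of comparison/test functions is in hand, the energy pairing and the parameter calibration in Young's inequality — which is what actually manufactures the two powers of $\lambda$ — are routine.
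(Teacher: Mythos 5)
The paper does not actually prove this statement: it is imported from \cite{KKKP} (``We will need the following theorem proved in \cite{KKKP}''), so there is no in-paper argument to compare against, and your proof has to be judged on its own and against the source. Your duality route is sound in outline and the arithmetic genuinely produces the claimed powers: pairing the two measure-data equations, discarding $\int u_\mu\,\de\mu_{v_\lambda}\ge 0$, and splitting the two cross terms by Young's inequality with parameter $\lambda/4$ gives $\tfrac{\lambda}{2}\mu(\Omega_\infty)\le C_p(\lambda^{1-p}+\lambda^{-1/(p-1)})\int|\nabla v_\lambda|^p\dx\dt$, which combined with $\int|\nabla v_\lambda|^p\dx\dt\le\lambda\mu_{v_\lambda}(\Omega_\infty)$ and the conservation of mass under truncation closes the estimate. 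The proof in \cite{KKKP} is organized differently: it bounds the (variational/energy) capacity of the level set by the $\mathcal W$-type norm of $\min(R^u_K,\lambda)/\lambda$, with the same Caccioppoli estimate for truncations supplying $\lambda^{1-p}$ from the gradient part and $\lambda^{-1/(p-1)}=\lambda^{1-p'}$ from the $p'$-power of the time derivative; your version replaces the capacity comparison of \Cref{lemma_zeroset} by a direct pairing with the extremal measures in \cref{capaKKKP}. Both are legitimate ways to manufacture the two exponents.

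What keeps this from being a complete proof is that several one-line assertions are themselves the substantive lemmas of the \cite{KKKP} machinery. (i) Using $v_\lambda$ and $u_\mu$ as mutual test functions is not free: neither is admissible in the distributional formulation, and the identity $\int v_\lambda\,\de\mu=\int v_\lambda\,\partial_t u_\mu+\int|\nabla u_\mu|^{p-2}\nabla u_\mu\cdot\nabla v_\lambda\dx\dt$ for the lower semicontinuous representative of $v_\lambda$ against a general admissible $\mu$ is precisely the kind of statement that requires the approximation theory for measure-data solutions. (ii) The inequality $v_\lambda\ge\lambda$ $\mu$-a.e.\ needs both that $\hat R^u_K=R^u_K$ outside a set of zero parabolic capacity (a nontrivial regularization theorem; only $R^u_K\ge u$ on $K$ is immediate from the definition) and that admissible measures do not charge such sets; the latter follows from comparison, the former you should either cite or avoid by passing to compact $K'\subset\{u>\lambda\}\cap K$ and using inner regularity. (iii) The limit $j\to\infty$ through $\min(u,j)$ requires weak convergence of the Riesz measures of an increasing \emph{unbounded} sequence of supersolutions, which is exactly where $u\in\mathfrak{B}$ enters and which cannot be extracted from \Cref{lemma_supcon} (that lemma assumes uniform boundedness). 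None of these is a wrong turn, but as written the hard steps are the ones being asserted rather than proved.
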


We immediately get that interior polar sets are sets of zero parabolic capacity.
\begin{theorem}
	\label{cor_polariszero} 
	Let $B \subset \Omega_\infty$ be an interior polar set, then it is a capacitable set of zero parabolic capacity.
\end{theorem}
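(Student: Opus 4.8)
The plan is to use the superlevel sets of the superparabolic function $u$ furnished by \Cref{defintpol} in two ways at once: by \Cref{thm levelset bdd} they have arbitrarily small parabolic capacity, and at the same time they are open neighbourhoods of $B$ shrinking down onto it, which is exactly what the capacitability criterion \cref{simpl} requires. By \Cref{defintpol} and the remark following it, fix a superparabolic function $u \in \mathfrak{B}$ on $\Omega_\infty$ together with a constant $\lambda_o > 1$ such that
\[
	\{u > \lambda_o\} \Subset \Omega_\infty \qquad\text{and}\qquad B \subset \{u = \infty\} = \bigcap_{\lambda > \lambda_o}\{u > \lambda\}.
\]
Since $u$ is lower semicontinuous, each set $U_\lambda := \{(x,t)\in\Omega_\infty : u(x,t) > \lambda\}$ is open, and $U_\lambda \subset U_{\lambda_o} \Subset \Omega_\infty$ for $\lambda \geq \lambda_o$. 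Set $K := \overline{U_{\lambda_o}}$, a compact subset of $\Omega_\infty$; then $U_\lambda \cap K = U_\lambda$ for every $\lambda \geq \lambda_o$.

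Next I would apply \Cref{thm levelset bdd} to $u$, to the \emph{fixed} compact set $K$, and to an arbitrary $\lambda > \lambda_o$, obtaining
\[
	\capacity(U_\lambda,\Omega_\infty) = \capacity(U_\lambda \cap K,\Omega_\infty) \leq C\,\mu_{R^u_K}(\Omega_\infty)\,\bigl(\lambda^{1-p} + \lambda^{-1/(p-1)}\bigr).
\]
The quantity $M := \mu_{R^u_K}(\Omega_\infty)$ is finite and independent of $\lambda$: the r\'eduite $R^u_K$ solves \cref{theeq} outside $\overline K$ by the standard properties of r\'eduites relative to a set, so its Riesz measure is carried by the bounded set $\overline K$, and $R^u_K$ does induce a Radon measure because, thanks to \cref{eqdefintpol}, $u$ — and hence $R^u_K$ — lies in the regular class $\mathfrak{B}$. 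Since $p > 2$, both exponents $1-p$ and $-1/(p-1)$ are negative, so $CM(\lambda^{1-p}+\lambda^{-1/(p-1)}) \to 0$ as $\lambda \to \infty$.

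To conclude: because $B \subset U_\lambda$ for all $\lambda > \lambda_o$ and $\capacity(\cdot,\Omega_\infty)$ is monotone (a larger set admits more competitors in \cref{capaKKKP}), we get $\capacity(B,\Omega_\infty) \leq \inf_{\lambda > \lambda_o}\capacity(U_\lambda,\Omega_\infty) = 0$, hence $\capacity(B,\Omega_\infty)=0$. Moreover the $U_\lambda$ are open subsets of $\Omega_\infty$ containing $B$, so
\[
	\inf\{\capacity(U,\Omega_\infty) : U \supset B,\ U\text{ open}\} \leq \inf_{\lambda > \lambda_o}\capacity(U_\lambda,\Omega_\infty) = 0 = \capacity(B,\Omega_\infty),
\]
which is precisely the capacitability condition \cref{simpl} of \Cref{defnonlincap}. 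Thus $B$ is a capacitable set of zero parabolic capacity.

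The argument becomes essentially immediate once one spots the right way to invoke \Cref{thm levelset bdd}, namely with $K$ a single compact neighbourhood of all the superlevel sets and $\lambda\to\infty$; so the main obstacle is conceptual rather than computational. The one genuinely technical point is the finiteness of $M=\mu_{R^u_K}(\Omega_\infty)$, which is where the regularity assumption \cref{eqdefintpol} (equivalently, $u\in\mathfrak{B}$) is used — dropping it and admitting obstacles of class $\mathfrak{M}$ would break both this step and the statement itself, as recorded in the discussion after \Cref{defintpol}.
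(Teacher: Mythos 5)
Your argument is correct and follows essentially the same route as the paper: take $K=\overline{\{u>\lambda_o\}}\Subset\Omega_\infty$, apply \Cref{thm levelset bdd} to the open superlevel sets $\{u>\lambda\}\supset B$, and let $\lambda\to\infty$ to get both $\capacity(B,\Omega_\infty)=0$ and capacitability via \cref{simpl}. Your extra discussion of the finiteness of $\mu_{R^u_K}(\Omega_\infty)$ is a reasonable elaboration of a point the paper leaves implicit in the statement of \Cref{thm levelset bdd}.
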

\begin{proof}
	Since $B$ is an interior polar set, there is a superparabolic function $u$ such that \cref{eqdefintpol} holds. From \cref{eqdefintpol} we get that for a large enough $\lambda_0 > 0$ we have $K:=\overline{\{u > \lambda_0\}} \Subset \Omega_\infty$, thus from \Cref{thm levelset bdd} 
	\begin{equation} \nonumber \label{}
		\capacity(B,\Omega_\infty) \leq \capacity(\{u > \lambda\},\Omega_\infty) \leq C \mu_{R^u_K}(\Omega_\infty)(\lambda^{1-p} + \lambda^{-1/(p-1)})
	\end{equation}
	for $\lambda \geq \lambda_0$ and thus $\capacity(B,\Omega_\infty) = 0$, it is capacitable since $\{u > \lambda\}$ are open for each $\lambda$.
\end{proof}

The following result shows that there are plenty of superparabolic functions with honest classical poles, and that the interior polar sets are exactly the compactly contained, capacitable sets of zero parabolic capacity. The proof of the following theorem is based on a combination of a classical argument together with \Cref{lemma_obsten}.
\begin{theorem}
	\label{thm_zeroispolar} Let $B \Subset \Omega_\infty$ be a capacitable Borel set of zero parabolic capacity, then $B$ is an interior polar set. 
\end{theorem}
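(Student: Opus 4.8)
The plan is to realize $B$ as a subset of the infinity set of a superparabolic function of class $\mathfrak{B}$ whose large superlevel sets are compactly contained in $\Omega_\infty$, using a classical potential-theoretic series construction fed by the obstacle-problem energy estimate of \Cref{lemma_obsten}. First I would use the capacitability hypothesis \cref{simpl}: since $\capacity(B,\Omega_\infty)=0$ and $B$ is capacitable, for each $j \in \mathbb{N}$ there is an open set $U_j \Subset \Omega_\infty$ with $B \subset U_j$ and $\capacity(U_j,\Omega_\infty)$ as small as we like. Shrinking if necessary we may assume all $U_j$ are contained in a fixed compact $L \Subset \Omega_\infty$, and by \Cref{lemma_capacitablesets}\ref{cap2} together with \Cref{corollar_generalcomparison} (or directly via \Cref{lemma_zeroset} applied to exhausting compacts) we can translate the smallness of $\capacity(U_j,\Omega_\infty)$ into smallness of $\varcap(U_j,\Omega_\infty)$, hence into the existence of a test function $\varphi_j \in C_0^\infty(\Omega\times\mathbb{R})$ with $\varphi_j \geq 1_{U_j}$, $\supp \varphi_j \subset L'$ for a slightly larger fixed compact $L'$, and $\norm{\varphi_j}_{\mathcal{W}(\Omega_\infty)}$ summably small, say $\leq 2^{-j}$.

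Next I would set $u_j := \hat R^{\varphi_j}$, the solution of the obstacle problem with obstacle $\varphi_j$. By the properties listed after the obstacle-problem definition, $u_j$ is a continuous superparabolic function with $u_j \geq \varphi_j \geq 1_{U_j} \geq 1_B$, and by \Cref{lemma_obsten} we have $\energy[\infty]{u_j} \lesssim \norm{\varphi_j}_{\mathcal{W}(\Omega_\infty)} \leq 2^{-j}$; in particular $u_j \in \mathcal{V}(\Omega_\infty)$ with $\norm{u_j}_{\mathcal{V}(\Omega_\infty)}^p \leq \energy[\infty]{u_j} \lesssim 2^{-j}$. Now define $u := \sum_{j=1}^\infty u_j$. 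Each partial sum is superparabolic (finite sums of superparabolic functions are superparabolic), and the sequence of partial sums is increasing, so $u$ is superparabolic provided it is finite on a dense set — which follows because $\sum_j \norm{u_j}_{\mathcal{V}(\Omega_\infty)} < \infty$ forces $\sum_j u_j$ to converge in $\mathcal{V}(\Omega_\infty)$, hence a.e., so $u < \infty$ a.e.\ and condition (ii) of \Cref{def_superparab} holds. Moreover $\energy[\infty]{u} \leq \big(\sum_j \energy[\infty]{u_j}^{1/2}\big)^2 < \infty$ by Minkowski in the energy norm, and in particular $u \in \mathcal{V}(\Omega_\infty)$, which is the quantitative bound one needs to certify $u \in \mathfrak{B}$. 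On $B$ we have $u_j \geq 1$ for every $j$, so $u \equiv \infty$ on $B$; thus $B \subset \{u \equiv \infty\}$.

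It remains to verify the compact-containment condition \cref{eqdefintpol}, i.e.\ that $\{u > \lambda_o\} \Subset \Omega_\infty$ for some large $\lambda_o$. This is where the uniform support control is used: each $u_j = \hat R^{\varphi_j}$ is a weak solution of the equation away from the contact set $\{u_j = \varphi_j\} \subset \supp \varphi_j \subset L'$, and since $u_j$ is a nonnegative solution vanishing on $\partial_p\Omega_\infty$ outside $L'$, the comparison/maximum principle forces $u_j$ to be small — in fact, by the energy bound $\energy[\infty]{u_j}\lesssim 2^{-j}$ and local boundedness estimates for solutions, $\sup_{\Omega_\infty \setminus N} u_j \to 0$ as $j\to\infty$ away from a fixed neighborhood $N$ of $L'$; summing, $u$ is bounded (say by some $\lambda_o$) outside $N$, so $\{u > \lambda_o\} \subset N \Subset \Omega_\infty$. \textbf{The main obstacle} I anticipate is precisely this last step: making rigorous that the superlevel set of the infinite sum stays inside a fixed compact set. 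One must control $u_j$ \emph{uniformly} outside a neighborhood of $L'$ — this requires combining the maximum principle (to push the "action" of $u_j$ into $\supp\varphi_j$) with an interior sup-bound in terms of the $L^p$-norm of the gradient or an $L^2$–$L^\infty$ estimate for solutions of the degenerate equation, and then summing a convergent series of such bounds. A cleaner alternative, which I would pursue in parallel, is to first replace $\varphi_j$ by $\varphi_j$ restricted so that all $\supp\varphi_j$ lie in one fixed cylinder $Q \Subset \Omega_\infty$, then work inside a slightly larger cylinder $Q' \Subset \Omega_\infty$ and use that $u_j$ solves the equation in $\Omega_\infty \setminus Q$ with zero parabolic boundary values to conclude $u_j = 0$ there outright if the obstacle is compactly supported in $Q$ — reducing the whole matter to the elementary observation that $\{u > \lambda\} \subset \overline{Q} \Subset \Omega_\infty$ for every $\lambda > 0$.
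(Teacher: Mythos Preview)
Your argument has a genuine gap at the step ``Each partial sum is superparabolic (finite sums of superparabolic functions are superparabolic).'' This is false for $p>2$: the $p$-parabolic operator is nonlinear, and the class of superparabolic functions (equivalently, of weak supersolutions) is \emph{not} closed under addition. The comparison principle in \Cref{def_superparab} does not respect sums, and there is no distributional inequality that adds. Consequently the function $u=\sum_j u_j$ you build need not be superparabolic at all, and the rest of the argument (including the energy bound via Minkowski and the containment of superlevel sets) never gets off the ground.

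The paper avoids this obstruction by summing at the level of the \emph{obstacles} rather than the solutions. One first forms $\psi_m=\eta\sum_{j=1}^m\varphi_j$ (with a fixed cutoff $\eta\in C_0^\infty(U)$, $B\Subset U\Subset\Omega_\infty$), observes that $\Wnorm[\infty]{\psi_m}$ is bounded uniformly in $m$, and then solves a \emph{single} obstacle problem for each $m$ to obtain $u_m=\hat R^{\psi_m}$. These $u_m$ are genuinely superparabolic by construction, they increase with $m$ because the obstacles do, and \Cref{lemma_obsten} gives a uniform energy bound $\energy[\infty]{u_m}\le C$. The increasing limit $u$ is then superparabolic (increasing limits of superparabolic functions finite a.e.\ stay in the class), belongs to $L^p(0,\infty;W^{1,p}_0(\Omega))$, and the compact containment $\{u>\lambda\}\Subset\Omega_\infty$ for large $\lambda$ follows from standard regularity theory together with the support control $\supp\mu_{u_m}\subset\overline U$. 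The key structural point you are missing is that for nonlinear equations one must take the supremum/limit \emph{after} solving, not sum solutions.
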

\begin{proof}
	Recall that the capacitability of $B$ implies that $\varcap(B,\Omega_\infty) = 0$, see \Cref{corollar_generalcomparison}. For each $j = 1,2, \ldots$ choose a function $\varphi_j \in C_0^{\infty}(\Omega \times \R)$ such that
	\begin{equation*}
		\Wnorm[\infty]{\varphi_j} \leq 2^{-j} 
	\end{equation*}
	and $\varphi_j \geq 1$ in a neighborhood of $B$. Let
	\begin{equation*}
		\hat \psi_m = \sum_{j=1}^{m} \varphi_j\,. 
	\end{equation*}
	For all $m \in \mathbb{N}$ we have $\Wnorm[\infty]{\hat \psi_m } \leq c_p $. Let $U \Subset \Omega_\infty$ be an open set such that $B \Subset U$, and consider a cutoff function $\eta \in C_0^\infty(U)$, then the function $\psi_m = \eta \hat \psi_m$, has $\Wnorm[\infty]{\psi_m } \leq c_p $ for a new constant $c_p > 1$, see e.g. \cite[Remark 2.3]{DPP}.
	
	Solving the obstacle problem with $\psi_m $ as the obstacle, we get a sequence of superparabolic functions $u_m $. By \Cref{lemma_obsten}
	\begin{equation*}
		\energy[\infty]{u_m } \leq C \Wnorm[\infty]{\psi_m } \leq C\, .
	\end{equation*}
	Since the sequence $\psi_m $ is increasing, it follows that the sequence $u_m$ is increasing. Note that the sequence $u_m$ has uniformly bounded energy and thus the limit function $u$ is finite a.e. and is hence a weak supersolution (see \cite[Remark 5.6]{KKP}). Note furthermore that $\supp \psi_m \subset U \implies \supp(\mu_{u_m}) \subset U$ and thus the measure corresponding to the supersolution $\mu$ has support inside $\overline{U} \subset \Omega_\infty$ by the weak convergence (see \cite[Remark 5.6]{KKP}).
	Note that because of the boundedness of the energies, the limit function is in $L^p(0,\infty;W^{1,p}_0(\Omega))$ and thus by classic regularity theory (see \cite{DiBe}) we have that for a large enough $\lambda$, $\{u > \lambda \} \Subset \Omega_\infty$ and 
	\begin{equation} \nonumber \label{}
		B \subset \big \{(x,t) \in \Omega_\infty: u(x,t) \equiv \infty\big \}\,.
	\end{equation}
	The above proves that $B$ is an interior polar set.
\end{proof}

\section{Balayage} \label{s.balayage} In this section we give a characterization of sets of zero capacity in terms of the obstacle problem (recall \Cref{subsecobst}). The proof is based on basic properties of the parabolic capacity and its relation to the variational parabolic capacity. 
\begin{theorem}
	\label{thm_balayage} Let $E \Subset \Omega_{\infty}$ be a bounded Borel set. Then the following two properties are equivalent: 
	\begin{enumerate}[label=(\arabic*)]
		\item \label{Bal1} The balayage $\hat R_E$ is identically zero. 
		\item \label{Bal2} The set $E$ is a capacitable set of zero parabolic capacity. 
	\end{enumerate}
\end{theorem}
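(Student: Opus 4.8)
The plan is to prove the two implications separately, using the relation $\capacity \approx \varcap$ on capacitable sets (Corollary~\ref{corollar_generalcomparison}) together with the definition of the réduite and basic properties of the obstacle problem. The key intermediate quantity is the measure $\mu_{\hat R_E}$ of the balayage: we want to show that $\hat R_E \equiv 0$ if and only if this measure is trivial, and then tie the triviality of the measure to the vanishing of the parabolic capacity.

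First I would prove \ref{Bal2} $\Rightarrow$ \ref{Bal1}. Suppose $E$ is a capacitable Borel set with $\capacity(E,\Omega_\infty)=0$, hence $\varcap(E,\Omega_\infty)=0$ by \Cref{corollar_generalcomparison}. Pick, as in the proof of \Cref{thm_zeroispolar}, test functions $\varphi_j \in C_0^\infty(\Omega\times\R)$ with $\varphi_j \geq 1$ near $E$ and $\Wnorm[\infty]{\varphi_j}\le 2^{-j}$, multiply the partial sums by a fixed cutoff supported in an open neighbourhood $U \Subset \Omega_\infty$ of $E$, solve the obstacle problems, and obtain an increasing sequence of superparabolic functions with uniformly bounded energy whose limit $w$ is a weak supersolution with $w \geq 1_E$ (indeed $w\equiv\infty$ on $E$ in the limiting construction, but the point here is just $w\ge 1$ on $E$). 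Since $\hat R_E$ is by definition the smallest superparabolic function above $1_E$ (property (iii) of the obstacle solution), we get $0 \le \hat R_E \le w$ everywhere; but we also get better: because the $\Wnorm[\infty]{\psi_m}$ are bounded by $c_p$ \emph{uniformly} (not going to $0$), this only bounds $w$, so to force $\hat R_E\equiv 0$ I would instead iterate: fix $\varepsilon>0$, choose a single $\varphi\in C_0^\infty$ with $\varphi\ge 1_E$ and $\Wnorm[\infty]{\varphi}\le\varepsilon$, solve the obstacle problem to get $v_\varepsilon$ with $\energy[\infty]{v_\varepsilon}\lesssim \Wnorm[\infty]{\varphi}\le\varepsilon$ by \Cref{lemma_obsten}; then $0\le \hat R_E \le v_\varepsilon$ and $\energy[\infty]{\hat R_E}\le\varepsilon$ forces $\hat R_E\equiv 0$ as $\varepsilon\to 0$. (One must check $v_\varepsilon \ge 1_E$ and is an admissible competitor for $R_E$; this is where the cutoff localisation and the fact that $\varphi\ge 1$ on a \emph{neighbourhood} of $E$ enter.)

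Next, \ref{Bal1} $\Rightarrow$ \ref{Bal2}. Assume $\hat R_E \equiv 0$. I would first reduce to compact sets: by definition $R_E = R^1_E = R^{1_E}$, and using monotonicity of the réduite in the set together with inner regularity, it suffices to control $\capacity(K,\Omega_\infty)$ for compact $K\subset E$. For such $K$, the balayage $\hat R_K$ is a nonnegative superparabolic function, $\hat R_K \le \hat R_E \equiv 0$ (monotonicity), so $\hat R_K \equiv 0$ as well; since $\hat R_K$ is a supersolution with $\hat R_K \ge 1_K$ failing only on a capacity-null exceptional set — more precisely $\hat R_K = 1$ q.e. on $K$ in the usual potential-theoretic sense — its associated Radon measure $\mu_{\hat R_K}$ is a natural candidate for the equilibrium measure, and $\mu_{R_K}(\Omega_\infty)$ is comparable to $\capacity(K,\Omega_\infty)$ via the standard identification used in \cite{KKKP,AKP}. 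Since $\hat R_K\equiv 0$ its distributional time derivative and gradient vanish, hence $\mu_{\hat R_K}=0$, giving $\capacity(K,\Omega_\infty)=0$; taking the supremum over $K\subset E$ yields $\capacity(E,\Omega_\infty)=0$, and then capacitability follows because $E$ is then a Borel set of zero variational capacity, which is capacitable by \Cref{lemma_capacitablesets}\ref{cap3} (using $\capacity \lesssim \varcap$, \Cref{corollar_generalcomparison}).

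The main obstacle I expect is the direction \ref{Bal1} $\Rightarrow$ \ref{Bal2}: relating the vanishing of the balayage to the vanishing of $\capacity$ requires the precise link between $\capacity(K,\Omega_\infty)$ and the measure $\mu_{R_K}(\Omega_\infty)$ (or $\mu_{R^\lambda_K}(\Omega_\infty)$ via \Cref{lemma_compa}), and one must be careful that $\hat R_E\equiv 0$ genuinely forces $R_K\equiv 0$ rather than merely $\hat R_K\equiv 0$ off $K$ — i.e. handling the behaviour exactly on $E$ and the subtle difference between $R$ and its lsc regularization. The cleanest route is probably to bypass this by observing that $\hat R_E\equiv 0$ together with the obstacle-problem characterization forces any $v_\varepsilon$ as above to satisfy $v_\varepsilon\equiv 0$, which is impossible unless no such $\varphi$ with small norm is needed — so in fact the argument should be run as: $\capacity(E,\Omega_\infty)>0$ would, via \Cref{lemma_zeroset}/\Cref{thm levelset bdd}-type estimates, produce a nontrivial lower bound on $\hat R_E$ on a set of positive measure, contradicting $\hat R_E\equiv 0$. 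Making that contradiction quantitative is the technical heart of the proof.
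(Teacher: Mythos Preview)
Your route for \ref{Bal2}$\Rightarrow$\ref{Bal1} is different from the paper's but workable. The paper takes open $U_j\supset E$ with $\capacity(U_j)\to 0$, invokes the identity $\mu_{R_{U_j}}(\Omega_\infty)=\capacity(U_j)$ from \cite[Lemma 5.9]{KKKP}, and uses \Cref{lemma_supcon} to pass to the limit $\hat R_{U_j}\searrow u$ with $\mu_u=0$, forcing $\hat u\equiv 0\ge \hat R_E$. Your approach through the variational capacity and the energy estimate of \Cref{lemma_obsten} is a legitimate alternative; one small point to tidy is that from $0\le \hat R_E\le v_\varepsilon$ you cannot compare \emph{energies}, but you don't need to: $\energy[\infty]{v_\varepsilon}\to 0$ forces $v_\varepsilon\to 0$ in $L^\infty(0,\infty;L^2(\Omega))$ and in $\V(\Omega_\infty)$, hence along a subsequence a.e., which is enough to kill $\hat R_E$ by lower semicontinuity. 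The existence of $\varphi\in C_0^\infty$ with $\varphi\ge 1$ on a neighbourhood of $E$ and arbitrarily small $\mathcal W$-norm is exactly what is used in the proof of \Cref{thm_zeroispolar}, so that step is consistent with the rest of the paper.

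The genuine gap is in \ref{Bal1}$\Rightarrow$\ref{Bal2}. Your reduction to compact $K\subset E$ correctly yields $\hat R_K\equiv 0$, hence $\mu_{\hat R_K}=0$, hence $\capacity(K,\Omega_\infty)=0$ for every such $K$; taking the supremum gives $\capacity(E,\Omega_\infty)=0$. But this is only the \emph{inner} capacity. To conclude that $E$ is capacitable you invoke \Cref{lemma_capacitablesets}\ref{cap3}, which requires $\varcap(E,\Omega_\infty)=0$, and you justify this ``using $\capacity\lesssim\varcap$''. That inequality goes the wrong way: from $\capacity(E)=0$ and $\capacity\lesssim\varcap$ you learn nothing about $\varcap(E)$. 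The reverse estimate $\varcap\lesssim\capacity$ in \Cref{corollar_generalcomparison} is only available for sets already known to be capacitable, so the argument is circular. Your final paragraph senses trouble but the proposed contrapositive (``$\capacity(E)>0$ would force a nontrivial lower bound on $\hat R_E$'') again only concerns the inner capacity and does not touch the outer regularity that capacitability demands.

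The paper resolves this by never passing to compact subsets of $E$ at all. Instead it uses Choquet's topological lemma to extract a \emph{decreasing} sequence of superparabolic functions $u_j\ge 1_E$ whose pointwise limit has lsc regularization $\hat R_E\equiv 0$; one may take the $u_j$ to vanish continuously on $\partial_p\Omega_\infty$. The open superlevel sets $U_j=\{u_j>1/2\}$ then satisfy $E\subset U_j\Subset\Omega_\infty$, and via the identity $\mu_{R_K}(\Omega_\infty)=\capacity(K,\Omega_\infty)$ together with \Cref{lemma_compa} and \Cref{lemma_supcon} one shows $\capacity(U_j,\Omega_\infty)\to 0$. This produces \emph{open supersets} of $E$ with vanishing capacity, so capacitability and $\overline{\capacity}(E)=0$ come out simultaneously. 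The missing idea in your plan is precisely this construction of open neighbourhoods from a minimizing sequence for the r\'eduite.
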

\begin{proof}
	We begin by proving $\Cref{Bal1} \implies \Cref{Bal2}$. From \Cref{Bal1} we see that $R_E = 0$ almost everywhere. On the other hand, by Choquet's topological lemma (see e.g. \cite{HKM}), we may take a decreasing sequence of superparabolic functions $u_j \geq 1_E$, $j = 0,1,\ldots$ converging pointwise to $u$ with $\hat{u} \equiv \hat{ R}_E$. Since $\Omega$ is regular we can assume that $u_j$ vanishes continuously on $\partial_p \Omega_\infty$, and that $u_j \to 0$ as $t \to \infty$. Note that \Cref{lemma_supcon} implies that $u$ is a supersolution and that $\mu_{u_j} \to \mu_u$ weakly.
	
	Since each $u_j$ is lower semicontinuous we can define the open sets $U_j = \{u_j > 1/2 \} \supset E$ for each $j=0,1,\ldots$, and since $u_j$ vanishes continuously on $\partial_p \Omega_\infty$ we have $U_j \Subset \Omega_\infty$. Let us exhaust each $U_j$ with compact sets $K_j^{1} \subset K_j^{2} \subset \cdots \subset U_j$. By inner regularity of the capacity \cite[Lemma 5.5]{KKKP} we have 
	\begin{equation}
		\nonumber \label{} \lim_{i \to \infty} \capacity(K_j^{i}, \Omega_\infty) = \capacity(U_j, \Omega_\infty)\,. 
	\end{equation}
	Let $\varepsilon > 0$ be an arbitrary number. Take numbers $i_j(\epsilon)$ such that 
	\begin{equation}
		\nonumber \label{} \capacity(K_j^{i_j}, \Omega_\infty) + \varepsilon \geq \capacity(U_j, \Omega_\infty)\,, 
	\end{equation}
	and denote $K'_j = K_j^{i_j}$. Let $R_j = R^{1/2}_{K'_j}$, then using \Cref{lemma_compa} and $\mu_{R_{K'_j}}(K'_j) = \capacity(K'_j, \Omega_\infty)$ (see \cite[Theorem 5.7]{KKKP}), we have 
	\begin{equation}
		\label{Balest1} \capacity(U_j,\Omega_\infty) - \varepsilon \leq \capacity(K'_j,\Omega_\infty) = \mu_{R_{K'_j}}(\Omega_\infty) \lesssim \mu_{R_j}(\Omega_\infty)\,. 
	\end{equation}
	
	Note that $u_j \geq \hat{R}_j $ everywhere. Since $u_j \to 0$, we have $\hat{R}_j \to 0$, thus using \Cref{lemma_supcon} we get that $\mu_{\hat{R}_j} \to 0$ weakly. Since the supports of $\mu_{\hat R_j}$ are compactly contained in $\Omega_\infty$ we see by a standard characterization of weak convergence (e.g. \cite[Section 1.9]{EG1}) that $\mu_{\hat{R}_j}(\Omega_\infty) \to 0$. By monotonicity and \cref{Balest1},
	\begin{equation*}
		\capacity(E, \Omega_\infty) \leq \liminf_{j \to \infty} \capacity(U_j,\Omega_\infty) \leq \liminf_{j \to \infty} \mu_{\hat{R}_j}(\Omega_\infty) + \varepsilon = \varepsilon\,.
	\end{equation*}
	Since $\varepsilon > 0$ was arbitrary we have \Cref{Bal2}.
	
	Let us now prove that $\Cref{Bal2} \implies \Cref{Bal1}$. Take a decreasing sequence of open sets $U_j$, $j = 1, \ldots$ with capacities converging to zero and $U_j \supset E$. From \cite[Lemma 5.9]{KKKP} we get that 
	\begin{equation}
		\nonumber \label{} \mu_{R_{U_j}}(\Omega_\infty) = \capacity(U_j,\Omega_\infty)\,. 
	\end{equation}
	Since $\hat R_{U_j} \searrow u$ for some function $u$, we can use \Cref{lemma_supcon} to conclude that $\mu_u \equiv 0$ and thus $\hat u \equiv 0$, but note that by construction $\hat u \geq \hat R_E$ and thus $\hat R_E \equiv 0$. 
\end{proof}

\section{Removability}\label{s.removability} By \Cref{lemma_zeroset} the parabolic capacity is comparable to the variational parabolic capacity. Of all capacities this is the most convenient when dealing with removability results. However, compared to the stationary case, a further complication arises when dealing with the space $\W$ since it is not stable with respect to truncations (see the comment by Tartar in \cite{P}). As such, functions in a minimizing sequence for the capacity need not be less than $1$, and doing a ``soft'' truncation of a function in $\W$ introduces an $L^{1}$ error term in the time derivative. This forces us to work with a weaker norm in the removability results, and consequently we must assume more about our supersolutions, i.e. that they are bounded. It should be remarked that the zero capacity sets that we are concerned with here are larger than the zero sets of the capacity defined by Saraiva in \cite{S1}.

The stationary counterpart of the following lemma can be found in \cite{HKM}.
\begin{lemma}
	\label{lemma_testspace} Let $K \subset \Omega_\infty$ be a compact set. Then if $\capacity(K,\Omega_\infty) = 0$ we have
	\begin{equation} \nonumber \label{}
		\overline{C_0^\infty(O \setminus K)}_{\mathcal{S}(\Omega_\infty)} \eqsim \overline{C_0^\infty(O)}_{\mathcal{S}(\Omega_\infty)}\quad \trm{for all open $O \subset \Omega_\infty$}\,,
	\end{equation}
	where $\mathcal{S}$ is defined as
	\begin{equation} \nonumber \label{}
		\mathcal{S}(\Omega_\infty) = \lbrace u \in \V(\Omega_\infty), u_t \in \V'(\Omega_\infty) + L^1(\Omega_\infty) \rbrace\,.
	\end{equation}
\end{lemma}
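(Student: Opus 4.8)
The plan is to mimic the classical elliptic argument from \cite{HKM}: since $\capacity(K,\Omega_\infty)=0$, by \Cref{lemma_zeroset} also $\varcap(K,\Omega_\infty)=0$, so there exist cutoff functions $\varphi_j\in C_0^\infty(\Omega\times\R)$ with $\varphi_j\ge 1$ in a neighborhood of $K$ and $\Wnorm[\infty]{\varphi_j}\le 2^{-j}$. The containment $\overline{C_0^\infty(O\setminus K)}_{\mathcal S(\Omega_\infty)}\subset\overline{C_0^\infty(O)}_{\mathcal S(\Omega_\infty)}$ is trivial, so the whole content is the reverse: given $u\in\overline{C_0^\infty(O)}_{\mathcal S(\Omega_\infty)}$, one must produce a sequence in $C_0^\infty(O\setminus K)$ converging to $u$ in $\mathcal S(\Omega_\infty)$. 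First I would reduce to $u\in C_0^\infty(O)$ by density and a diagonal argument. For such a $u$, set $u_j=(1-\varphi_j)u$; this is supported in $O\setminus(\text{nbhd of }K)$ and can be smoothed to lie in $C_0^\infty(O\setminus K)$ by a mollification that does not spoil the estimates. It remains to show $u_j\to u$, i.e. $\varphi_j u\to 0$, in the $\mathcal S$-norm.

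The key steps are the norm estimates for $\varphi_j u$. For the $\V$-part, $\norm{\varphi_j u}_{\V(\Omega_\infty)}\lesssim \norm{u}_{L^\infty}\norm{\grad\varphi_j}_{L^p}+\norm{\grad u}_{L^\infty}\norm{\varphi_j}_{L^p}$, and both $\norm{\varphi_j}_{\V(\Omega_\infty)}$ and $\norm{\varphi_j}_{L^p}$ (via Poincaré on the fixed domain $\Omega$) are controlled by $\Wnorm[\infty]{\varphi_j}^{1/p}\to 0$. For the time-derivative part one writes $\partial_t(\varphi_j u)=u\,\partial_t\varphi_j+\varphi_j\,\partial_t u$. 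Here is the point where the space $\mathcal S$ rather than $\W$ is essential: $u\,\partial_t\varphi_j$ need not be controlled in $\V'$ alone — multiplying a $\V'$-function by the (merely bounded, not $W^{1,p}$-in-space) function $u$ is not a bounded operation on $\V'$ — but it is fine to estimate it in $L^1(\Omega_\infty)$, where $\norm{u\,\partial_t\varphi_j}_{L^1}\le\norm{u}_{L^\infty}\norm{\partial_t\varphi_j}_{L^1}\lesssim \Vprimenorm[\infty]{\partial_t\varphi_j}\to 0$ using that $\Omega_\infty$ has finite... well, using the $\V'$-$L^1$ embedding after a Sobolev bound on $u$, or simply absorbing it directly into the $L^1$ slot of $\mathcal S$. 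The term $\varphi_j\,\partial_t u$ is handled by splitting $\partial_t u=\xi+\zeta$ with $\xi\in\V'$, $\zeta\in L^1$; then $\varphi_j\zeta\to 0$ in $L^1$ by dominated convergence (since $\varphi_j\to 0$ pointwise a.e. off $K$, which is null for Lebesgue measure), and $\varphi_j\xi$ is estimated in $\V'$ by duality: for $\phi\in\V(\Omega_\infty)$ with $\norm{\phi}_\V\le 1$, $\int \varphi_j\xi\,\phi=\langle\xi,\varphi_j\phi\rangle$, and one bounds $\norm{\varphi_j\phi}_\V$ by $\norm{\grad\varphi_j}_{L^p}\norm{\phi}_{L^\infty}+\norm{\varphi_j}_{L^\infty}\norm{\grad\phi}_{L^p}$ — but since $\varphi_j$ is only small in $\W$-norm, not in $L^\infty$, this again needs the $L^1$ escape hatch, i.e. one keeps $\varphi_j\xi$ in the $L^1$-part rather than insisting on $\V'$.

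I expect the main obstacle to be exactly this bookkeeping: the factor $\varphi_j$ is small in $\Wnorm[\infty]{\cdot}$ but \emph{not} in $L^\infty$, so whenever a product $\varphi_j\cdot(\text{something in }\V')$ appears one cannot stay in $\V'$ and must land in $L^1$, which is precisely why the statement is phrased with $\mathcal S$ and why the norm used to measure convergence must be the weaker $\mathcal S$-norm (whatever its precise definition — presumably $\norm{u}_\V+\inf\{\norm{\xi}_{\V'}+\norm{\zeta}_{L^1}:\,u_t=\xi+\zeta\}$, or a $p$-th-power variant matching $\Wnorm{\cdot}$). Making the decomposition $\partial_t u=\xi+\zeta$ interact correctly with multiplication by $\varphi_j$, and verifying that the mollification producing genuinely smooth compactly supported functions in $O\setminus K$ does not destroy the smallness, are the routine-but-delicate parts I would need to execute carefully; everything else is the standard Poincaré/product-rule estimate on the fixed spatial domain $\Omega$.
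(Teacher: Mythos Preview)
Your overall strategy matches the paper's --- reduce to $\varphi\in C_0^\infty(O)$, multiply by $1-(\text{small cutoff})$, and show the difference tends to zero --- but there is a genuine gap in the support claim, and your diagnosis of where the $L^1$ term arises is off.

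The gap: you assert that $(1-\varphi_j)\varphi$ is supported in $O\setminus(\text{nbhd of }K)$. This is false as stated. The almost-minimizers for $\varcap$ satisfy only $\varphi_j\ge 1$ on a neighborhood of $K$, not $\varphi_j=1$; hence $1-\varphi_j$ can be strictly negative (and nonzero) there, so $(1-\varphi_j)\varphi$ need not vanish near $K$ at all. Mollification does not help --- the function is already smooth; the problem is support. The paper fixes this by composing with a smooth truncation $H:\R\to[0,1]$ with $H(s)=1$ for $s\ge 1$, setting $w_j=H(\varphi_j)$; then $w_j\equiv 1$ where $\varphi_j\ge 1$, so $(1-w_j)\varphi\in C_0^\infty(O\setminus K)$ genuinely.

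This truncation is also where the $L^1$ part actually enters. One has $\partial_t w_j=\bar H(\varphi_j)\,\partial_t\varphi_j$, and writing $\partial_t\varphi_j=f_j-\dive F_j$ with $f_j,F_j\in L^{p'}$, the chain rule produces a cross term $\bar H'(\varphi_j)\,\nabla\varphi_j\cdot F_j$ that cannot be absorbed into $\V'$ but is controlled in $L^1$ by $\Vnorm[\infty]{\varphi_j}\,\Vprimenorm[\infty]{\partial_t\varphi_j}\to 0$. By contrast, the terms you flagged are not the obstruction: since here $\varphi\in C_0^\infty(O)$ has $\varphi\in W^{1,\infty}$, multiplication by $\varphi$ \emph{is} bounded on $\V'$, so $\varphi\,\partial_t\varphi_j$ stays in $\V'$; and $\varphi_j\,\partial_t\varphi$ is handled by Poincar\'e on $\supp\varphi$. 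So the $\mathcal S$-space is needed, but for the truncation step you are missing, not for the product estimates you describe.
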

\begin{proof}
	Assume first that $\capacity(K,\Omega_\infty) = 0$. Let $O \subset \Omega_\infty$ be an arbitrary open set, it is clear that 
	\begin{equation}
		\nonumber \label{} \overline{C_0^\infty(O \setminus K)}_{\mathcal{S}(\Omega_\infty)} \hookrightarrow \overline{C_0^\infty(O)}_{\mathcal{S}(\Omega_\infty)}\,. 
	\end{equation}
	To prove the other direction, take $\varphi \in C_0^{\infty}(O)$. Since $\capacity(K,\Omega_\infty) = 0$ and $K$ is capacitable, there is a sequence of open sets $U_j \supset K$ such that
	\begin{equation} \nonumber \label{}
		\lim_{j \to \infty} \capacity(U_j,\Omega_\infty) = 0\,.
	\end{equation}
	Thus, according to \Cref{corollar_generalcomparison} we have
	\begin{equation} \nonumber \label{}
		\lim_{j \to \infty} \varcap(U_j,\Omega_\infty) = 0\,,
	\end{equation}
	and thus there is a sequence of almost minimizers $u_j \in C_0^{\infty} (\Omega \times \R)$ satisfying
	\begin{align*}
		u_j \geq 1_{U_j} \quad \trm{and} \quad \Wnorm[\infty]{u_j} \leq 2\varcap(U_j,\Omega_\infty)\,.
	\end{align*}
	Inspired by \cite{Petitta} we let $\bar H(s) \in C_0^\infty \big((-1,1)\big )$ be a non-negative function such that $\int_0^1 \bar H(s) \de s = 1$. Let $H(s) = \int_0^s \bar H(s) \de s$, then $H(1) = 1$ and $H$ maps $\R_+ \cup\{0\} \to [0,1]$. Furthermore it is clear that if for a non-negative function $\phi \in C_0^\infty(\Xi)$, $\Xi \subset \R^{n+1}$, then $H(\phi) \in C_0^\infty(\Xi)$.
	
	Define $w_j = H(u_j) \in C_0^{\infty}(\Omega \times \R)$, then $w_j$ satisfies
	\begin{equation} \nonumber \label{}
		0 \leq w_j \leq 1 \quad \trm{and} \quad w_j \geq 1_{U_j}\,.
	\end{equation}
	
	Now as in \cite{Petitta}, we can split $\partial_t w_j = \bar H(u_j) \partial_t u_j$ into two parts, one of which will be in the dual space $\V'$ whereas the other one will be in $L^1$, that is,
\[ \partial_t w_j =  \left [ \partial_t w_j \right ]_a + \left [ \partial_t w_j \right ]_b  \]
with
\[ \left [ \partial_t w_j \right ]_a \in \mathcal{V}'(\Omega_\infty) \quad \trm{and} \quad \left [ \partial_t w_j \right ]_b \in L^{1}(\Omega_\infty).\]
	Indeed, since $\partial_t u_j \in \V'$, it can be represented as $\partial_t u_j = f_j - \dive F_j$ for some functions $f_j, F_j \in L^{p'}(\Omega_\infty)$, where $f_j$ is real valued and $F_j$ takes values in $\R^n$. Accordingly, we define
	\begin{equation} \nonumber \label{}
		\left [ \partial_t w_j \right ]_a = \left [ \bar H(u_j) \partial_t u_j \right ]_a := \bar H(u_j) (f_j - \dive F_j) - \bar H'(u_j) \grad u_j \cdot F_j\,.
	\end{equation}
	To calculate the dual norm of $\left [ \partial_t w_j \right ]_a$, let $v \in \V(\Omega_\infty)$. Then
	\begin{align*}
		\int_{\Omega_\infty} v \left [ \partial_t w_j \right ]_a \dx \dt &= \int_{\Omega_\infty} \left( v \bar H(u_j) (f_j - \dive F_j) - v \bar H'(u_j) \grad u_j \cdot F_j \right) \dx \dt \\
		&=: I_1 + I_2 + I_3\,.
	\end{align*}
	Let us first note that from H\"older's inequality we easily get
	\begin{equation} \label{eq:I1 norm}
		\abs{I_1} \leq \Vnorm[\infty]{v}\norm{f_j}_{L^{p'}(\Omega_\infty)}\,.
	\end{equation}
	The divergence theorem gives that 
	\begin{align*}
		I_2 + I_3 &= -\int_{\Omega_\infty} v \bar H(u_j) \dive F_j \dx \dt + I_3 \\
		&= \int_{\Omega_\infty} \left( \bar H(u_j) \nabla v \cdot F_j + v \bar H'(u_j) \grad u_j \cdot F_j \right) \dx \dt + I_3 \\
		&= \int_{\Omega_\infty} \bar H(u_j) \nabla v \cdot F_j \dx \dt\,.
	\end{align*}
	Thus by H\"older's inequality we get
	\begin{equation} \label{eq:I2 norm}
		\abs{I_2+I_3} \lesssim \Vnorm[\infty]{v} \norm{F_j}_{L^{p'}(\Omega_\infty)}\,.
	\end{equation}
	Combining \cref{eq:I1 norm,eq:I2 norm} we have proved that
	\begin{align*}
		\int_{\Omega_\infty} v \left [ \partial_t w_j \right ]_a \dx \dt \lesssim \Vnorm[\infty]{v}\Vprimenorm[\infty]{\partial_t u_j}\,,
	\end{align*}
	which implies that
	\begin{align} \nonumber \label{}
		\Vprimenorm[\infty]{\left [ \partial_t w_j \right ]_a} \lesssim \Vprimenorm[\infty]{\partial_t u_j}\,.
	\end{align}
	For the $L^{1}$ part, we can easily see that from H\"older's inequality
	\begin{equation} \nonumber \label{}
		\norm{\left [ \partial_t w_j \right ]_b}_{L^{1}(\Omega_\infty)} = \norm{\bar H'(u_j) \grad u_j \cdot F_j}_{L^{1}(\Omega_\infty)} \lesssim \Vnorm[\infty]{u_j} \Vprimenorm[\infty]{\partial_t u_j}\,.
	\end{equation}
	Finally, it is obvious that also $\Vnorm[\infty]{w_j} \lesssim \Vnorm[\infty]{u_j}$. Altogether, we have now proved that since the norms of $u_j$ go to zero, the following norm of $w_j$ also goes to zero:
	\begin{equation} \nonumber \label{}
		\Vnorm[\infty]{w_j} + \Vprimenorm[\infty]{\left [ \partial_t w_j \right ]_a} + \norm{\left [ \partial_t w_j \right ]_b}_{L^{1}(\Omega_\infty)} \to 0\,, \quad j \to \infty\,.
	\end{equation}
	
	Now $(1-w_j) \varphi \in C_0 ^{\infty}(O \setminus K)$ and
	\begin{equation*}
		(1-w_j) \varphi \to \varphi\,, \quad \trm{in $\mathcal{S}$.}
	\end{equation*}
\end{proof}

\begin{theorem} \label{thm removability}
	Let $K \subset \Omega_\infty$ be a compact set. Assume that $v$ is a weak solution (supersolution) in $\Omega_\infty \setminus K$, and that $v$ is bounded in a neighborhood of $K$. If $\capacity(K,\Omega_\infty) = 0$ then there is a continuous (lower semicontinuous) extension $u$ which is a weak solution (supersolution) in $\Omega_\infty$ and $v = u$ a.e in $\Omega_\infty \setminus K$. 
	
	On the other hand, if $\capacity(K,\Omega_\infty) > 0$ then there is at least one weak solution as above which cannot be extended to a weak solution in $\Omega_\infty$.
\end{theorem}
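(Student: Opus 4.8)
The plan is to prove the two halves of the statement separately; for the removable case the strategy has two steps — first upgrade the regularity of $v$ across $K$, then pass to the limit in the weak formulation using the approximation of \Cref{lemma_testspace}.

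\emph{Regularity across $K$.} By \Cref{lemma_zeroset} and \Cref{corollar_generalcomparison} the hypothesis gives $\varcap(K,\Omega_\infty)=0$, so $\abs{K}=0$ and the extension of $v$ is unambiguous modulo null sets; since $K$ is compact, $K\subset\Omega\times(t_0,T_0)$ for some $0<t_0<T_0<\infty$. Fix a neighbourhood $N$ of $K$ on which $v$ is bounded and a cutoff $\theta\in C_0^\infty(\Omega\times(t_0,T_0))$ with $\theta\equiv1$ near $K$ and $\supp\theta\subset N$. Choose open $U_j\supset K$ with $\overline{U_j}\subset N$ and $\varcap(U_j,\Omega_\infty)\to0$, almost minimizers $\phi_j\in C_0^\infty(\Omega\times\R)$ with $\supp\phi_j\subset N$, $\phi_j\ge1_{U_j}$ and $\Wnorm[\infty]{\phi_j}\to0$, and put $w_j=H(\phi_j)$ with $H$ as in the proof of \Cref{lemma_testspace}, so that $0\le w_j\le1$, $w_j\equiv1$ on $U_j$, $\supp w_j\subset N$ and
\[ \Vnorm[\infty]{w_j}+\Vprimenorm[\infty]{[\partial_t w_j]_a}+\norm{[\partial_t w_j]_b}_{L^1(\Omega_\infty)}\longrightarrow0\,. \]
The function $\zeta_j=(1-w_j)\theta$ vanishes on $U_j$, hence $\zeta_j^p(v-k)$ for a fixed constant $k$ (in the supersolution case one uses the nonnegative $\zeta_j^p(v-\inf_N v)$ instead) is admissible in the (super)solution formulation for $v$ on $\Omega_\infty\setminus K$. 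A standard Caccioppoli computation — with a time regularization, and with $\theta$ vanishing for $t\le t_0$ and $t\ge T_0$ to kill the boundary-in-time terms — yields, after absorbing,
\[ \int_{\Omega_\infty}\zeta_j^p\abs{\grad v}^p\dx\dt\lesssim C(\theta)+\Big|\,\frac p2\int_{\Omega_\infty}(v-k)^2\zeta_j^{p-1}\theta\,\partial_t w_j\dx\dt\,\Big|+o(1)\,. \]
The pairing of $\partial_t w_j$ with the \emph{bounded} function $(v-k)^2\zeta_j^{p-1}\theta$ (supported in $N$) against the $L^1$-part is $o(1)$ because $v$ is bounded on $\supp\theta$; against the $\V'$-part it is at most $\Vprimenorm[\infty]{[\partial_t w_j]_a}$ times $\Vnorm[\infty]{(v-k)^2\zeta_j^{p-1}\theta}$, and since $\grad$ of that function produces $\grad v$ only through a factor $\zeta_j^{p-1}\le\zeta_j$, the latter norm is $\lesssim C(\theta)+C\big(\int\zeta_j^p\abs{\grad v}^p\big)^{1/p}+o(1)$. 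Writing $A_j=\int\zeta_j^p\abs{\grad v}^p$ and $\epsilon_j=\Vprimenorm[\infty]{[\partial_t w_j]_a}\to0$, the inequality reads $A_j\lesssim C+\epsilon_jA_j^{1/p}+o(1)$, and because $p>2$ the sublinear term forces $\sup_jA_j<\infty$. Letting $j\to\infty$, Fatou together with $w_j\to0$ a.e.\ (along a subsequence, since $\Vnorm[\infty]{w_j}\to0$ and $w_j$ has compact support in $\Omega$) gives $\int\theta^p\abs{\grad v}^p<\infty$; as $\theta\equiv1$ near $K$ and $v$ is a local weak (super)solution elsewhere, $v\in L^p_{\trm{loc}}(0,\infty;W^{1,p}_{\trm{loc}}(\Omega))$ in all of $\Omega_\infty$, with the obvious weak gradient (seen by approximating $\theta v$ by $\theta v(1-w_j)$).

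\emph{The equation.} Fix $\varphi\in C_0^\infty(\Omega_\infty)$, nonnegative in the supersolution case. Then $(1-w_j)\varphi\in C_0^\infty(\Omega_\infty\setminus K)$ is admissible for $v$ on $\Omega_\infty\setminus K$; expanding $\grad((1-w_j)\varphi)$ and $\partial_t((1-w_j)\varphi)$, the $\grad w_j$-term vanishes in the limit because $\Vnorm[\infty]{w_j}\to0$ while $\abs{\grad v}^{p-2}\grad v\,\varphi\in L^{p'}(\Omega_\infty)$ by the first step; the $\partial_t w_j$-term is handled exactly as above, pairing the bounded function $v\varphi$ (note $\supp\partial_t w_j\subset N$) against $[\partial_t w_j]_b\in L^1$ and against $[\partial_t w_j]_a\in\V'$ (using $v\varphi\in\V(\Omega_\infty)$, again from the first step); and the two remaining terms converge by dominated convergence since $0\le w_j\le1$, $w_j\to0$ a.e.\ and $v\in L^1_{\trm{loc}}(\Omega_\infty)$. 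Hence $\int_{\Omega_\infty}(\abs{\grad v}^{p-2}\grad v\cdot\grad\varphi-v\,\partial_t\varphi)\dx\dt=0$ (resp.\ $\ge0$), so $v$ is a weak solution (supersolution) in $\Omega_\infty$; its continuous representative (interior regularity, \cite{DiBe}) in the solution case, or its lower semicontinuous regularization (\Cref{thm-regularization}) in the supersolution case, is the asserted $u$, and $u=v$ a.e.\ on $\Omega_\infty\setminus K$ by construction.

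\emph{Necessity.} If $\capacity(K,\Omega_\infty)>0$, take $v=\hat R_K$. By the properties of the balayage (see \cite{KKKP} and \Cref{s.balayage}) its Riesz measure $\mu_{R_K}$ is carried by $K$ with $\mu_{R_K}(\Omega_\infty)=\mu_{R_K}(K)=\capacity(K,\Omega_\infty)>0$; hence $\hat R_K$ is a bounded weak solution of \cref{theeq} in $\Omega_\infty\setminus K$, it vanishes on $\partial_p\Omega_\infty$ (as $\Omega$ is regular), and $\hat R_K\not\equiv0$ (else $\mu_{R_K}=0$). If some weak solution $u$ in $\Omega_\infty$ satisfied $u=\hat R_K$ a.e.\ in $\Omega_\infty\setminus K$, then $u$ would take boundary value $0$ on $\partial_p\Omega_\infty$, so the comparison principle for the Cauchy--Dirichlet problem (comparing $u$ with $0$ from both sides) would give $u\equiv0$, contradicting $\hat R_K\not\equiv0$. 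Thus $\hat R_K$ admits no extension.

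\emph{Main obstacle.} The crux is the first step: because $\W$ is not stable under truncation, the capacitary cutoffs carry only a $\V'+L^1$ control on their time derivatives, which is precisely why one must assume $v$ bounded near $K$ (to pair the $L^1$ error against an $L^\infty$ function) and work in the space $\mathcal S$ of \Cref{lemma_testspace}; the apparent circularity — that the $\V'$ pairing reintroduces $\grad v$ into the Caccioppoli estimate — is resolved only because it enters with the sublinear power $A_j^{1/p}$ and can therefore be absorbed once $\epsilon_j$ is small. The remaining points — admissibility of the test functions, the limit passages, and the uniqueness used for necessity — are routine.
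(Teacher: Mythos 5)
Your argument follows the paper's route essentially step for step: the cutoffs $w_j=H(\phi_j)$ built from almost-minimizers of the variational capacity, with $\partial_t w_j$ split into a $\V'$ part and an $L^1$ part, are exactly the functions constructed in the proof of \Cref{lemma_testspace} (you simply inline that lemma); the Sobolev regularity of $v$ across $K$ comes from a Caccioppoli estimate in which the $\V'$ part of the time term is reabsorbed via Young's inequality and the $L^1$ part is paired against the boundedness of $v$ near $K$; the equation is recovered by testing with $(1-w_j)\varphi$; and the converse uses $\hat R_K$ together with \Cref{thm_balayage} and the comparison principle. The only structural difference is that the paper plugs the approximating test functions into the ready-made supersolution Caccioppoli of Kuusi \cite{KHar} for $v=u+1$, whereas you derive the energy bound by hand; your absorption $A_j\lesssim C+\epsilon_j A_j^{1/p}$ is sound (it needs only $p>1$, not $p>2$).

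One concrete slip in the supersolution branch: the admissible nonnegative test function that yields an \emph{upper} bound on $\int\zeta_j^p\abs{\grad v}^p\dx\dt$ is $\zeta_j^p(\sup_N v-v)$, not $\zeta_j^p(v-\inf_N v)$. With your choice the principal term enters as $+\int\zeta_j^p\abs{\grad v}^p$ on the side of the inequality that is bounded \emph{below}, so the estimate runs the useless way and the regularity step does not close for supersolutions. This is a one-line fix (and is precisely the sign issue that the cited Caccioppoli for supersolutions in \cite{KHar} takes care of), but as written that branch of the argument fails. The solution case, where both signs are available, and the necessity part are correct as stated.
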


\begin{proof}
	We start by assuming $\capacity(K,\Omega_\infty) = 0$. Redefine and extend $v$ as follows
	\begin{equation} \nonumber \label{}
		u(x,t) := \underset{\Omega_\infty \setminus K \ni(y,s) \to (x,t)}{\essliminf} v(y,s)\,.
	\end{equation}
	Then the original $v$ coincides with the extension $u$ a.e. in $\Omega_\infty \setminus K$ (see \Cref{thm-regularization}).
	
	We will first prove that $u$ is in the correct parabolic Sobolev space. Let $O$, $K \subset O \subset \Omega_\infty$ be an open set such that $u \leq M$ on $O$. Let us cover $K$ with a finite union of space-time cylinders such that all cylinders are contained in $O$. Let $Q'$ be one of these cylinders, $Q' = B' \times (\tau'_1,\tau'_2)$, and consider an enlargement $Q \supset Q'$ such that $Q \subset O$ and denote $Q = B \times (\tau_1,\tau_2)$. Now consider a test function $\varphi \in C_0^\infty(Q)$ with $0 \leq \varphi \leq 1$, $\varphi = 1$ on $Q'$ and let $\varphi_j \in C_0^\infty(Q \setminus K)$, $0 \leq \varphi_j \leq 1$, be such that $\varphi_j \to \varphi$ in $\mathcal{S}(\Omega_\infty)$. Such a sequence exists because of \Cref{lemma_testspace}. Writing down the Caccioppoli estimate for weak supersolutions, \cite[Lemma 2.2]{KHar}, we get for $v = u+1$
	\begin{align} \label{eq_remcacc}
		\int_Q &|\nabla v|^p v^{-3/2} \varphi_j^p \dx \dt + \sup_{\tau_1 < t < \tau_2} \int_B v^{1/2} \varphi_j^p \dx\\
		&\lesssim \int_Q v^{p+3/2} |\nabla \varphi_j|^p \dx \dt +\abs{\int_Q v^{1/2} \frac{\partial \varphi_j^p}{\partial t} \dx \dt} =: I_1 + \abs{I_2}\,. \notag
	\end{align}
	Begin by noting that since $u \leq M$, we get
	\begin{equation} \label{eq_remgrad1}
		\int_Q |\nabla u|^p \varphi_j^p \dx \dt \lesssim_M \int_Q |\nabla v|^p v^{-3/2} \varphi_j^p \dx \dt\,.
	\end{equation}
	Since $u$ is bounded, we can deduce that
	\begin{align} \label{eq_remI2}
		\frac{\abs{I_2}}{p} &= \left \lvert \int_Q v^{1/2} \varphi_j^{p-1} \left [ \frac{\partial \varphi_j}{\partial t}\right ]_a \dx \dt+\int_Q v^{1/2} \varphi_j^{p-1} \left [ \frac{\partial \varphi_j}{\partial t} \right ]_b \dx \dt \right \rvert \notag \\
		&\lesssim \Vnorm[\infty]{v^{1/2} \varphi_j^{p-1}} \Vprimenorm[\infty]{ [\partial_t \varphi_j]_a}+\norm{ [\partial_t \varphi_j]_b}_{L^1(\Omega_\infty)} \notag \\
		&\lesssim \epsilon \Vnorm[\infty]{v^{1/2} \varphi_j^{p-1}}^p +  \frac{1}{\epsilon}\Vprimenorm[\infty]{ [\partial_t \varphi_j]_a}^{p'}+\norm{ [\partial_t \varphi_j]_b}_{L^1(\Omega_\infty)}\,,
	\end{align}
	where we in the last step have used Youngs inequality with $\epsilon$, and the subscripts $a$ and $b$ indicate the decomposition into the $\mathcal{V}'$ and $L^{1}$ parts. Moreover since $\varphi_j \leq 1$, $u \leq M$, and $v \geq 1$ we get for the first term on the right-hand side in \Cref{eq_remI2}
	\begin{align} \label{eq_remgrad2}
		\int_{Q} |\grad (v^{1/2} \varphi_j^{p-1})|^p \dx \dt &\lesssim \int_{Q} |\grad u|^p v^{-p/2} \varphi_j^{p(p-1)} \dx \dt + \int_{Q} |\grad \varphi_j|^p v^{p} \varphi_j^{p(p-2)} \dx \dt \notag\\
		&\lesssim_{M} \int_{Q} |\grad u|^p \varphi_j^{p} \dx \dt + \int_{Q} |\grad \varphi_j|^p \dx \dt\,.
	\end{align}
	Combining \Cref{eq_remcacc,eq_remgrad1,eq_remI2,eq_remgrad2} we see that taking $\epsilon$ small enough depending on $p,n,\delta,M$, we can reabsorb and get
	\begin{align} \label{eq_remlocalgradbdd}
		\int_Q |\nabla u|^p \varphi_j^p \dx \dt \lesssim_{M} \max\big \{\norm{\varphi_j}_{\mathcal{S}},\norm{\varphi_j}_{\mathcal{S}}^{p}\big \}+1
	\end{align}
	where the comparison constant depends on the $\Vnorm[\infty]{\phi_j}$. The inequality \Cref{eq_remlocalgradbdd}, the boundedness of $u$ in $O$ together with the fact that $u$ is a weak solution (supersolution) implies that $u \in L_{\trm{loc}}^{p}((0,\infty); W_{\trm{loc}}^{1,p}(\Omega))$\,.

	Next, let us show that the extended function $u$ is also a solution (supersolution) in $\Omega_\infty$. Again we consider the same covering of $K$ as above, and consider a $Q,Q'$ pair. Let $\psi \in C_0^{\infty}(Q)$. Then from \Cref{lemma_testspace} we see that there exists $\psi_j \in C_0^{\infty}(Q \setminus K)$ such that $\psi_j \to \psi$ in $\mathcal{S}(\Omega_\infty)$. Hence
	\begin{align*}
		&\left \lvert \int_{Q} \abs{\nabla u}^{p-2} \nabla u \cdot \nabla (\psi_i - \psi) \dx \dt \right \rvert \\
		& \lesssim_{|Q|} \left( \int_{Q} \abs{\nabla u}^{p} \dx \dt \right)^{1/p'} \left( \int_{\Omega_\infty} \abs{\nabla (\psi_i - \psi)}^{p} \dx \dt \right)^{1/p} \\
		&\lesssim_{|Q|} \norm{u}_{\V(Q)}^{p-1} \norm{\psi_i - \psi}_{\V(\Omega_\infty)} \to 0\,,
	\end{align*}
	and using that $u \leq M$ in $Q$
	\begin{align*}
		\abs{ \int_{Q} u \partial_t(\psi_i - \psi) \dx \dt } & 
		\leq \abs{ \int_{Q} u \left [ \partial_t(\psi_i - \psi) \right ]_a \dx \dt} + \abs{\int_{Q} u \left [\partial_t(\psi_i - \psi)\right ]_b \dx \dt} \\
		&\lesssim_M \norm{u}_{\V(Q)} \Vprimenorm[\infty]{\left [ \partial_t(\psi_i - \psi) \right ]_a } \\
		&\quad + \norm{\left [\partial_t(\psi_i - \psi) \right ]_b }_{L^1(\Omega_\infty)}\to 0\,,
	\end{align*}
	again the subscripts $a,b$ denotes the parts in $\V'$ and $L^1$ respectively.
	We thus see that for $\psi \in C_0^\infty(\Omega_\infty)$ we have
	\begin{align*}
		0 = (\leq) \int_{\Omega_\infty}( \abs{\nabla u}^{p-2} \nabla u \cdot \nabla \psi -& u \partial_t \psi )\dx \dt\,,
	\end{align*}
	which proves that $u$ is a solution (supersolution) in $\Omega_\infty$.
	
	Conversely, if $\capacity(K,\Omega_\infty) > 0$, then $\hat{R}_K $ is a bounded weak solution in $\Omega_\infty \setminus K$ with zero boundary values on $\partial_p \Omega_\infty$ that is not identically zero (see \Cref{thm_balayage}), and hence it does not have extension to a weak solution in $\Omega_\infty$.
\end{proof}

\section{Hausdorff estimates} \label{s.Hausdorff} In this section we relate the capacity to more geometric concepts. More precisely, we will prove Hausdorff estimates inspired by \cite{TW}. Before going into Hausdorff measures, we give an estimate which relates the standard Lebesgue measure of a set to the capacity of the set. 
\begin{lemma}
	Let $E$ be a compactly contained subset in $\Omega_\infty$, then the following inequality holds: 
	\begin{equation*}
		|E| \lesssim \capacity(E,\Omega_\infty)^{\frac{n+p}{n}}. 
	\end{equation*}
\end{lemma}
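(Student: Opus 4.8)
The plan is to reduce the estimate to the variational parabolic capacity via \Cref{corollar_generalcomparison}, and then to exploit the parabolic Sobolev embedding. It suffices to treat the case of a compact set $K \subset \Omega_\infty$; the general compactly contained Borel case then follows by inner regularity of both sides. By \Cref{corollar_generalcomparison} we have $\capacity(K,\Omega_\infty) \gtrsim \varcap(K,\Omega_\infty)$, so it is enough to show $|K| \lesssim \varcap(K,\Omega_\infty)^{(n+p)/n}$. Pick a competitor $\varphi \in C_0^\infty(\Omega \times \R)$ with $\varphi \geq 1_K$; since $\varphi \geq 1$ on $K$ we have $|K| \leq \int_{\Omega_\infty} \varphi^q \dx \dt$ for any exponent $q \geq 1$ we like to choose.

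The key step is to bound $\int_{\Omega_\infty}\varphi^q\dx\dt$ by a power of $\Wnorm[\infty]{\varphi}$ with the right exponent. The natural tool is the parabolic embedding $L^\infty(0,\infty;L^2(\Omega)) \cap L^p(0,\infty;W_0^{1,p}(\Omega)) \hookrightarrow L^q(\Omega_\infty)$ with $q = p\frac{n+2}{n}$ (the standard parabolic Gagliardo–Nirenberg/Sobolev inequality), which gives
\begin{equation*}
	\int_{\Omega_\infty} \varphi^{q} \dx \dt \lesssim \left( \sup_{t} \int_\Omega \varphi(x,t)^2 \dx \right)^{p/n} \left( \int_{\Omega_\infty} |\nabla \varphi|^p \dx \dt \right)^{(n+2)/n}.
\end{equation*}
However, the $\W$-norm controls $\norm{\varphi}_{\V}^p$ and $\norm{\varphi_t}_{\V'}^{p'}$ but not directly the sup-in-time $L^2$ mass; the latter must be recovered by testing the identity $\varphi_t = \varphi_t$ against $\varphi$ and integrating, i.e. $\sup_t \int_\Omega \varphi^2 \dx \lesssim \int_{\Omega_\infty}\varphi_t \varphi \dx\dt + (\text{initial data, which is }0) \lesssim \Vprimenorm[\infty]{\varphi_t}\Vnorm[\infty]{\varphi}$. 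Feeding this into the embedding and organizing the exponents so that both $\norm{\varphi}_{\V}^p$ and $\norm{\varphi_t}_{\V'}^{p'}$ appear to first power (after using Young's inequality), one arrives at $\int \varphi^q \lesssim \Wnorm[\infty]{\varphi}^{\alpha}$ for the appropriate $\alpha$; taking the infimum over $\varphi$ and matching $\alpha$ against the target exponent $(n+p)/n$ completes the proof. Alternatively one can shortcut this by invoking \Cref{lemma_obsten} together with \Cref{lemma_zeroset}: solve the obstacle problem to get a superparabolic $u \geq 1_K$ with $\energy[\infty]{u} \lesssim \varcap(K,\Omega_\infty)$, and apply the parabolic Sobolev inequality directly to $u$, since $\energy[\infty]{u}$ is exactly the quantity appearing on the right of the embedding.

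The main obstacle is bookkeeping the exponents so that the power of the $\W$-norm (equivalently the energy) that emerges is precisely $(n+p)/n$ rather than something like $(n+2)/n$ — i.e. correctly balancing the $L^2$-in-time contribution against the gradient contribution via Young's inequality, using that on $K$ we are free to raise $\varphi$ to any power $\geq 1$ before applying the embedding. A secondary (routine) point is the passage from compact $K$ to a general compactly contained Borel set $E$: this is immediate once both $|\cdot|$ and $\capacity(\cdot,\Omega_\infty)$ are known to be inner regular on such sets, the latter being the content of the capacitability discussion and \cite[Lemma 5.5]{KKKP}.
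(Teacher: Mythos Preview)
Your proposal is correct; in fact your ``alternative'' route---solve the obstacle problem to get a superparabolic $u\ge 1_K$ with $\energy[\infty]{u}\lesssim\varcap(K,\Omega_\infty)$ and apply the parabolic Sobolev embedding to $u$---is exactly what the paper does, phrased through the energy capacity $\encap$ and an almost-minimizer together with \Cref{lemma_zeroset}. Your ``main'' route through a smooth competitor for $\varcap$ also works but is slightly longer, since one must first recover $\sup_t\int_\Omega\varphi^2\dx\lesssim\Wnorm[\infty]{\varphi}$ via the duality-plus-Young argument you sketch; going through $\encap$ avoids this step because the $L^{\infty}_t L^{2}_x$ mass is already part of the energy.

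One correction: the exponent on the gradient factor in your displayed embedding should be $1$, not $(n+2)/n$ (check homogeneity under $\varphi\mapsto\lambda\varphi$). The standard parabolic Gagliardo--Nirenberg inequality reads
\[
\int_{\Omega_\infty}\varphi^{\,p(n+2)/n}\dx\dt\ \lesssim\ \Big(\sup_t\int_\Omega\varphi^2\dx\Big)^{p/n}\int_{\Omega_\infty}|\nabla\varphi|^p\dx\dt,
\]
and since each factor on the right is bounded by $\energy[\infty]{\varphi}$, the power $(n+p)/n=1+p/n$ on the energy appears immediately. So the exponent bookkeeping you flag as the main obstacle is in fact automatic once the embedding is written correctly; no further balancing between the two factors is needed. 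The only place Young's inequality enters is in your main route, to pass from $\|\varphi_t\|_{\V'}\|\varphi\|_{\V}$ to $\Wnorm[\infty]{\varphi}$, and that step is routine.
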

\begin{proof}
	First of all let $E$ be a finite collection of space-time cylinders. Let $\epsilon > 0$ be given and let $u$ be a superparabolic function admissible for the energy capacity such that it almost minimizes the energy, i.e. 
	\begin{equation*}
		\energy[\infty]{u} \leq \encap(E,\Omega_\infty) + \epsilon 
	\end{equation*}
	and $u \geq 1_E$. Note that $u$ can be assumed to be bounded. Using this and the parabolic Sobolev embedding, we see that 
	\begin{multline*}
		|E| \leq \int_{\Omega_\infty} u^{q} \dx \dt \lesssim \energy[\infty]{u}^{\frac{n+p}{n}} \lesssim \left (\encap(E,\Omega_\infty) + \epsilon\right )^{\frac{n+p}{n}} \lesssim \left ( \capacity(E,\Omega_\infty) + \epsilon \right )^{\frac{n+p}{n}}, 
	\end{multline*}
	where $q = p (n+p)/n $. We can then conclude the proof by outer regularity of the measure and the capacity. 
\end{proof}

In the above estimate we can only hope to get a one-sided bound, since for example discs have positive capacity but zero measure, however it should be noted that for parabolic cylinders of size $(r,r^p)$ this estimate is sharp. To improve the estimate above we start by introducing some notation and auxiliary results. Consider the translation invariant metric
\begin{equation*}
	d_p((x,t),(0,0) ) = \max \{\abs{x}, \abs{t}^{1/p} \}. 
\end{equation*}
In the definition of Hausdorff measure, we will replace the Euclidean distance by the distance defined above since if we want to get estimates through a Hausdorff measure, we must use a metric that distinguishes the time coordinate from the spatial ones. Let $Q_r(x,t) = \{(y,s): d_p((x,t);y,s) < r\}$ denote the metric ball with respect to the metric $d_p$ defined above and define the parabolic diameter as 
\begin{equation}
	\nonumber \label{} d_p(E) = \sup\{d_p((y,s);(x,t)),(y,s) \in E, (x,t) \in E\}. 
\end{equation}
Let $E \Subset \Omega_\infty$ and define the $s$-dimensional Hausdorff $\delta$-content with respect to the parabolic metric $d_p$ 
\begin{equation*}
	\mathcal{P}_\delta^{s} (E) = \inf \left \{ \sum_{i=1}^\infty d_p(A_i)^s, E \subset \bigcup_{i=1}^\infty A_i, A_i \subset \Omega_\infty, d_p(A_i) < \delta \right \} \,.
\end{equation*}
Taking the limit as $\delta \to 0$
\begin{equation*}
	\mathcal{P}^{s} (E) = \lim_{\delta \to 0} \mathcal{P}_\delta^{s} (E)
\end{equation*}
we obtain the Hausdorff measure with respect to the metric $d_p$.

To control the parabolic capacity with the Hausdorff measure above, we need an estimate for the capacity of parabolic cylinders. This is provided by \cite[Corollary 2]{AKP}, that is, if $Q_{2r}(x,t) \subset \Omega_\infty$, then 
\begin{equation}
	\label{eq_estcyl} \capacity( Q_r, \Omega_\infty ) \approx r^{n}. 
\end{equation}
Once we know this, we immediately get the following upper bound on the capacity in terms of the Hausdorff measure $\mathcal{P}^n$.
\begin{proposition}
	\label{prop_c<h} Let $B \Subset \Omega_\infty$ be a Borel set. Then $\capacity (B, \Omega_\infty) \lesssim_{n,p} \mathcal{P}^{n}(B)$. 
\end{proposition}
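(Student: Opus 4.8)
The plan is to reduce the claim to the cylinder estimate \cref{eq_estcyl} by a Vitali-type covering argument, exploiting the subadditivity of the outer parabolic capacity. First I would reduce to the case of a compact set $K \subset \Omega_\infty$: since $\capacity(\cdot,\Omega_\infty)$ is inner regular and $\mathcal{P}^n$ is a Borel measure, it suffices to bound $\capacity(K,\Omega_\infty)$ by $\mathcal{P}^n_\delta(K)$ uniformly in small $\delta$, and then let $\delta \to 0$ after taking suprema over compact subsets. Fix $\delta>0$ small enough that every set of parabolic diameter $<\delta$ that meets $K$ is compactly contained in $\Omega_\infty$ (possible since $K \Subset \Omega_\infty$).

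Next I would take an admissible covering $K \subset \bigcup_i A_i$ with $d_p(A_i)<\delta$ and $\sum_i d_p(A_i)^n \le \mathcal{P}^n_\delta(K)+\varepsilon$. Each $A_i$ is contained in a parabolic ball $Q_{r_i}(z_i)$ with $r_i \approx d_p(A_i)$ (indeed $r_i = d_p(A_i)$ works, or twice that to be safe), and after shrinking $\delta$ we may assume $Q_{2r_i}(z_i) \subset \Omega_\infty$ so that \cref{eq_estcyl} applies: $\capacity(Q_{r_i},\Omega_\infty)\approx r_i^n \approx d_p(A_i)^n$. Then by countable subadditivity of the outer capacity $\overline{\capacity}$ — which on compact and open sets agrees with $\capacity$ by \Cref{simplifyingcapacitability}, and which is countably subadditive on open sets (passing to open enlargements $Q_{(1+\eta)r_i}(z_i)$ of the balls and using \cref{simpl}) — we get
\begin{equation*}
	\capacity(K,\Omega_\infty) \le \sum_{i} \capacity(Q_{r_i}(z_i),\Omega_\infty) \lesssim_{n,p} \sum_i d_p(A_i)^n \le \mathcal{P}^n_\delta(K)+\varepsilon\,.
\end{equation*}
Letting $\varepsilon \to 0$ and then $\delta \to 0$ gives $\capacity(K,\Omega_\infty)\lesssim_{n,p}\mathcal{P}^n(K)$; taking the supremum over compact $K\subset B$ and using inner regularity of the capacity (\cite[Lemma 5.5]{KKKP}) yields $\capacity(B,\Omega_\infty)=\underline{\capacity}(B,\Omega_\infty)\lesssim_{n,p}\mathcal{P}^n(B)$.

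The main obstacle is justifying the countable subadditivity step, since the paper is explicit that strong subadditivity of $\capacity$ is open and not every Borel set is known to be capacitable. The point I need is only \emph{countable subadditivity on open sets} (or, equivalently, that $\overline{\capacity}$ is a countably subadditive outer measure on open sets), which follows from the variational comparison \Cref{corollar_generalcomparison} together with subadditivity of $\varcap$ — or can be obtained directly since $\capacity(\bigcup_i U_i,\Omega_\infty)$ for open $U_i$ is controlled by testing the supremum in \cref{capaKKKP} against a partition of the mass. I would handle this by replacing each $A_i$ by a slightly larger open parabolic ball, covering $K$ by finitely many of them (compactness), and invoking finite subadditivity for open sets, which is unproblematic; the sum over the finite subcover is still $\lesssim \mathcal{P}^n_\delta(K)+\varepsilon$. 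A secondary technical point is ensuring the comparison constant in \cref{eq_estcyl} is uniform over all the small balls $Q_{r_i}(z_i)$, which holds because that estimate depends only on $n,p$ once $Q_{2r_i}(z_i)\subset\Omega_\infty$, and this is guaranteed by our choice of $\delta$.
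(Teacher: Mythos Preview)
Your argument is correct and follows the same route as the paper's proof: cover by sets of small parabolic diameter, enclose each in a parabolic cylinder of comparable radius, apply the cylinder estimate \cref{eq_estcyl}, and sum using subadditivity. The paper does this directly for $B$ without the preliminary reduction to compact subsets, simply invoking countable subadditivity of $\capacity$ (established in \cite{KKKP}); your reduction to compact $K$ and use of finite subcovers is a harmless detour that reaches the same estimate.

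One clarification: your worry about subadditivity conflates two different properties. What the paper flags as open is \emph{strong} subadditivity, $\capacity(A\cup B)+\capacity(A\cap B)\le\capacity(A)+\capacity(B)$, which would yield Choquet capacitability of all Borel sets. Ordinary \emph{countable} subadditivity, $\capacity(\bigcup_i E_i)\le\sum_i\capacity(E_i)$, is already known for this capacity from \cite{KKKP} and is all that is needed here; so the ``main obstacle'' you identify is not actually present, and the workaround via finite covers and open enlargements, while valid, is unnecessary.
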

\begin{proof}
	Let $\epsilon > 0$ and fix $\delta > 0$. Let $\{E_i\}_i$ be a cover of $B$ such that $d_p(E_i) < \delta$ and
	\begin{equation*}
		\sum _i d_p(E_i)^{n} \leq \mathcal{P}_{\delta}^{n}(B) + \epsilon\,. 
	\end{equation*}
	For each $E_i$ choose a parabolic cylinder $C_i \supset E_i$ with radius $d_p(E_i)$. Now using subadditivity of the parabolic capacity and the estimate for the capacity of a parabolic cylinder \cref{eq_estcyl}, we get 
	\begin{align*}
		\capacity (B,\, \Omega_\infty) &\leq \capacity \big (\bigcup_i E_i ,\, \Omega_\infty \big ) \leq \sum_i \capacity ( C_i ,\, \Omega_\infty) \\
		& \lesssim \sum_i d_p(E_i)^{n} \lesssim \left(\mathcal{P}_{\delta}^{n}(B) + \epsilon \right). 
	\end{align*}
	The claim follows by taking $\delta \to 0$ and then $\epsilon \to 0$. 
\end{proof}

To prove a converse, we need a pair of lemmas. The overall idea is to use a modification of Frostman's lemma together with a potential estimate. The generalizations of Frostman's lemma to different metric spaces are well known, but in order to make our presentation as transparent as possible, we will not use the most general statements, but we will instead point out a certain shortcut that reduces the proof to the most classical case.
\begin{lemma}
	[Frostman, see \cite{BP,Howroyd}, and \cite{Mattila}] \label{lemma_frostman} Let $p > 2$ be a \textit{rational number}. Let $A \subset \mathbb{R}^{n+1}$ be a Borel (or even analytic) set with $\mathcal{P}^{s} (A) > 0$. Then there exists a Radon measure $\mu$ such that $\mu(\mathbb{R}^{n+1}) > 0$, $\supp (\mu) \subset A$ and
	\begin{equation*}
		\mu(E) \leq d_p(E)^{s} 
	\end{equation*}
	for all sets $E \subset \mathbb{R}^{n+1}$. Moreover, there exists a compact subset $K \subset A$ with $\mathcal{P}^{s} (K) > 0$. 
\end{lemma}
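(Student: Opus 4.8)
The plan is to reduce the parabolic Frostman statement to the classical Euclidean Frostman lemma by exploiting the fact that $p$ is rational. Write $p = a/b$ with $a, b$ positive integers (and $a > 2b$ since $p > 2$). The parabolic metric $d_p((x,t),(y,s)) = \max\{|x-y|, |t-s|^{1/p}\}$ is, up to the bi-Lipschitz change of variables $\tau = t^{?}$... more precisely, consider the map $\Phi \colon \mathbb{R}^{n+1} \to \mathbb{R}^{n+b}$ that fixes the spatial variable and replaces the single time variable by $b$ new coordinates in such a way that $|t-s|^{1/p} = |t-s|^{b/a}$ becomes comparable to the Euclidean distance in the new coordinates raised to a suitable power. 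The cleanest route: observe that $d_p$-balls $Q_r$ are, up to fixed multiplicative constants, the same as ``rectangles'' of dimensions $r \times \cdots \times r \times r^p$, and that $r^p = r^{a/b}$; so after the substitution $t = \sigma^{a}$ (so that $|t|^{1/p} = |t|^{b/a} = |\sigma|^{b}$... ) one compares with a Euclidean-type metric. I will make this rigorous by noting that it suffices to prove the lemma for a compact set, and then to transfer a classical Euclidean Frostman measure through an explicit bi-Hölder homeomorphism between $(\mathbb{R}^{n+1}, d_p)$ and a subset of Euclidean space of dimension $n+1$ where the $d_p$-Hausdorff measure of dimension $s$ corresponds to the Euclidean Hausdorff measure of the appropriately scaled dimension.

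More concretely, here are the steps. First, I would reduce to the compact case: since $\mathcal{P}^s(A) > 0$ and $\mathcal{P}^s$ is a Borel regular (metric outer) measure, standard measure theory (see \cite{Mattila}) gives a compact $K \subset A$ with $0 < \mathcal{P}^s(K) < \infty$; proving the lemma for this $K$ yields it for $A$, and simultaneously delivers the ``moreover'' clause. Second, I would set up the comparison map. With $p = a/b$ rational, define $g \colon \mathbb{R} \to \mathbb{R}$ by $g(t) = \operatorname{sgn}(t)|t|^{b/a}$ so that $|g(t) - g(s)| \approx |t-s|^{b/a}$ is false in general (power functions are only Hölder), so instead I will argue directly with contents: the $d_p$-metric is comparable to the quasi-metric $\rho$ on $\mathbb{R}^{n+1}$ given by $\rho((x,t),(y,s)) = |x-y| + |t-s|^{1/p}$, and for such a product-type (Ahlfors-regular) quasi-metric space the Frostman lemma holds by the general theory in \cite{BP,Howroyd}. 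Third — and this is the ``shortcut'' the authors allude to — instead of invoking a general metric-space Frostman theorem, I would observe that $(\mathbb{R}^{n+1}, d_p)$ with $p = a/b$ is bi-Lipschitz-equivalent to the $b$-th power space, i.e. the snowflake-type identification: the dyadic $d_p$-cubes at scale $2^{-k}$ are in bijective correspondence with products of $n$ spatial dyadic intervals of length $2^{-k}$ and $2^{ak-bk}$... I would instead just partition time more finely. The honest clean statement: cover $\mathbb{R}^{n+1}$ by the standard dyadic parabolic cubes $Q$ (spatial side $2^{-k}$, temporal side $2^{-pk}$ rounded to a dyadic-in-$\mathbb{Z}[1/?]$ scale using rationality of $p$ so that these cubes genuinely nest), and run the classical dyadic construction of a Frostman measure on the net of these cubes: assign mass, normalize downward, pass to a weak-$*$ limit. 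Rationality of $p$ is exactly what makes the parabolic dyadic cubes a genuine nested net (so $2^{-pk} = 2^{-ak/b}$ lives on the grid of scales $2^{-\mathbb{Z}/b}$), which is all the classical dyadic Frostman argument needs.

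So the core of the proof is: build the dyadic net $\mathcal{D} = \bigcup_k \mathcal{D}_k$ of parabolic cubes (using $p\in\mathbb{Q}$ to guarantee nesting), recall that $d_p(Q) \approx 2^{-k}$ for $Q \in \mathcal{D}_k$ and that the cardinality/covering bounds work as in $\mathbb{R}^N$, then carry out verbatim the classical net-Frostman construction (as in \cite[Theorem 8.8]{Mattila}): since $\mathcal{P}^s(K) > 0$ there is, for every cover of $K$ by cubes of $\mathcal{D}_k$, a lower bound on $\sum d_p(Q)^s$; a compactness/duality argument (or the explicit top-down mass-redistribution algorithm) then produces a nonzero Radon measure $\mu$ supported on $K$ with $\mu(Q) \le d_p(Q)^s$ for all $Q \in \mathcal{D}$, and finally every set $E$ is contained in a bounded number of cubes of $\mathcal{D}_k$ with $2^{-k} \approx d_p(E)$, giving $\mu(E) \lesssim d_p(E)^s$ — after rescaling $\mu$ by the fixed constant this becomes the clean bound $\mu(E) \le d_p(E)^s$ claimed. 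The main obstacle — and the only place rationality is genuinely used — is ensuring the parabolic dyadic cubes form an honestly nested filtration so that the classical dyadic argument applies without modification; once that bookkeeping is in place the rest is the standard Frostman machinery, and I would simply cite \cite{BP,Howroyd,Mattila} for the details of the net construction rather than reproduce it.
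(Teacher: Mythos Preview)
Your proposal --- after the exploratory detours through bi-Lipschitz maps and snowflaking that you correctly abandon --- lands on exactly the paper's approach: use the rationality $p=k/l$ to build a genuinely nested $2^l$-adic grid of parabolic rectangles (each rectangle of sides $r, r^p$ splitting into $2^{nl+k}$ children of sides $2^{-l}r, 2^{-k}r^p$), run the classical dyadic Frostman construction verbatim on this grid for compact sets, and handle the Borel/analytic case via the existence of a compact subset of positive $\mathcal{P}^s$-measure. The only cosmetic difference is ordering --- the paper builds the grid first and reduces to compact sets second, while you reduce first --- and you should be aware that the compact-subset step for \emph{analytic} sets is not quite ``standard measure theory'' but precisely the content of Howroyd's theorem or the argument in the appendix of \cite{BP}.
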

\begin{proof}
	We will briefly describe how the proof in an appendix of \cite{BP} also gives this theorem (and much more). First note that the rationality of $p = k/l$ implies the existence of $2^{l}$-adic parabolic rectangles. Indeed, if $Q_r$ is a parabolic rectangle with sides $r$ and $r^{p}$. We can partition it into $2^{n\,l + k}$ parabolic subrectangles of sides $2^{-l} r$ and $(2^{- l } r)^{p} = 2^{-k} r^p$, we call this collection $\mathcal{G}_1$. Proceeding inductively, assume that $\mathcal{G}_i$ is formed, each of its elements having sides $2^{-il}r$ and $2^{-ik} r^{p}$, we can repeat the procedure to obtain new parabolic rectangles having sides $2^{-(i+1)l}r$ and $2^{-(i+1)k} r^{p}$, forming $\mathcal{G}_{i+1}$. This inductive construction gives a generalized ''dyadic grid''.
	
	Once we have the grid of metric rectangles compatible with $d_p$, we can run the classical proof of Frostman's lemma for compact sets verbatim (see for instance \cite[Theorem 8.8]{Mattila} or \cite{BP}). It thus remains to show that a Borel (or analytic) set of positive $\mathcal{P}^{s}$ measure has a compact subset of positive measure. Again, a verbatim application of the proof in the appendix of \cite{BP} gives the result. 
\end{proof}
\begin{remark}
	The rationality assumption is not necessary. The theorem actually holds true in complete separable metric spaces (see \cite{Howroyd} and \cite{Mattila}). Moreover, the proof scheme above can also be implemented using a more general construction of dyadic cubes of metric spaces, originally due to Christ, see \cite{Christ} or \cite{HK}. However, the simple case of rational $p$ will be sufficient in our application. 
\end{remark}

The following potential estimate has been established in the elliptic case in \cite{KM}, and the parabolic version is from \cite{AKM} see also \cite{LSS} for a related result.
\begin{lemma}
	\label{lemma_pot} Let $u$ be a weak solution of the measure data problem
	\begin{equation*}
		u_t - \Delta_p u = \mu 
	\end{equation*}
	with finite Radon measure and zero boundary and initial values in $\Omega_\infty$. If 
	\begin{equation*}
		\left [\int_{0}^{r} \left( \frac{\mu(Q^-_\rho(x,t))}{\rho^{n}} \right)^{\frac{p}{n(p-2) + p}} \frac{{\rm d} \rho}{\rho} \right ]^{\frac{n(p-2) + p}{p}}
	\end{equation*}
	is locally bounded for some $r$, where $Q_\rho^-$ is the lower half of $Q_\rho$, then $u$ is locally bounded.
\end{lemma}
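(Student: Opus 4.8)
The plan is to prove the degenerate parabolic analogue of the Kilpel\"ainen--Mal\'y pointwise potential bound \cite{KM}, namely the quantitative estimate
\begin{equation*}
	\hat u(x_0,t_0) \;\lesssim\; \Big(\,\fint_{Q_{2r}^-(x_0,t_0)} |u|^{p-1}\,\dx\dt\Big)^{\!\frac{1}{p-1}} + \Big[\int_{0}^{2r}\Big(\frac{\mu(Q_\rho^-(x_0,t_0))}{\rho^{n}}\Big)^{\!\frac{p}{n(p-2)+p}}\frac{{\rm d}\rho}{\rho}\Big]^{\frac{n(p-2)+p}{p}}
\end{equation*}
at every interior point $(x_0,t_0)$, from which the statement follows at once: local boundedness of (the lower semicontinuous representative of) $u$ holds on any set where the right-hand side is locally bounded, which is precisely the hypothesis. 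Because $\mu$ need not be absolutely continuous, I would first establish this a priori for smooth data --- mollifications $\mu_\varepsilon\to\mu$ and the corresponding energy solutions $u_\varepsilon$ --- and then pass to the limit using $\mu_\varepsilon(Q)\to\mu(Q)$ on cylinders with $\mu$-negligible boundary together with stability of the solutions; after a translation one may take $(x_0,t_0)=(0,0)$.

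The core of the argument is an intrinsic De Giorgi iteration. For a level $\lambda>0$ introduce the intrinsic cylinder $Q_\rho^{\lambda}=B_\rho\times(-\lambda^{2-p}\rho^{p},0)$, on which the equation, after division by $\lambda$, looks essentially non-degenerate and the scale-free datum is $\mu(Q_\rho^{\lambda})/(\lambda\rho^{n})$. Testing with truncations $(u-k)_+$ against a cutoff gives a Caccioppoli inequality on $Q_\rho^{\lambda}$, which fed into the parabolic Sobolev embedding over the intrinsic time length yields a sup estimate of the schematic shape
\begin{equation*}
	\sup_{Q_{\rho/2}^{\lambda}} u \;\lesssim\; \lambda\Big(\,\fint_{Q_\rho^{\lambda}}(u/\lambda)_+^{\,p}\,\dx\dt\Big)^{\!\theta} + \lambda\Big(\frac{\mu(Q_\rho^{\lambda})}{\lambda\,\rho^{n}}\Big)^{\!\alpha}\,,
\end{equation*}
the exponents $\theta,\alpha>0$ arising from the interplay of the Caccioppoli estimate, the parabolic Sobolev exponent, and the $\lambda^{2-p}$ dilation of the time direction; this is the mechanism that forces the deficit powers $p/(n(p-2)+p)$ and $(n(p-2)+p)/p$ in the final potential, both of which collapse to $1$ when $p=2$ and then reproduce the classical thermal (Riesz) potential.

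Next I would run the Kilpel\"ainen--Mal\'y stopping-time iteration: with $\rho_j=2^{-j}r$ build levels $\lambda_j$ recursively so that $\lambda_j$ dominates a suitable average of $u$ on $Q_{\rho_j}^{\lambda_j}$, the intrinsic cylinders nest, $\lambda_j\gtrsim1$, and $\lambda_{j+1}$ equals $\lambda_j$ augmented by the measure contribution $\lambda_j\big(\mu(Q_{\rho_j}^{\lambda_j})/(\lambda_j\rho_j^{n})\big)^{\alpha}$ supplied by the sup estimate at scale $\rho_j$. Since $\lambda_j\gtrsim1$ forces $Q_{\rho_j}^{\lambda_j}\subset Q_{\rho_j}^-$, summing the geometric series in $j$ bounds $\sup_j\lambda_j$ by $\lambda_0$ plus the discrete Riemann sum of the integral in the statement, i.e.\ $\sup_j\lambda_j\lesssim\lambda_0+\mathbf{W}$ with $\lambda_0$ comparable to the initial average of $|u|$ and $\mathbf{W}$ the hypothesised potential; letting the starting scale tend to $0$ then pins down $\hat u(0,0)$.

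The hard part will be the intrinsic geometry. Because $p>2$, the admissible cylinders depend on the a priori unknown size of $u$, so at each stage one has to check that the updated cylinder $Q_{\rho_{j+1}}^{\lambda_{j+1}}$ is still intrinsic --- i.e.\ re-establish the alternative that $\lambda_{j+1}$ is comparable to the relevant average of $u$ on it --- which is the delicate stopping-time bookkeeping with no counterpart in the heat-equation or elliptic cases, and it is also where the exponents must be tracked carefully through the rescalings. Everything else (the Caccioppoli inequality, the intrinsic Sobolev inequality, summing the series, and the approximation of $\mu$) is by comparison routine; since this estimate enters our discussion only as an external input, I would in the end simply refer to \cite{AKM,KM} for the full details.
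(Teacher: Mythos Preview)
The paper does not actually prove this lemma: it is stated as an external input and attributed to \cite{AKM} (with the elliptic prototype in \cite{KM} and a related result in \cite{LSS}), with no argument given beyond the citation. Your closing sentence --- that you would ``in the end simply refer to \cite{AKM,KM} for the full details'' --- is therefore exactly what the paper does, so in that sense your proposal matches the paper.

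That said, the sketch you give of the underlying argument is accurate and considerably more informative than the paper's bare citation. The pointwise bound via an intrinsic De Giorgi iteration, the Kilpel\"ainen--Mal\'y stopping-time construction of levels $\lambda_j$ on nested intrinsic cylinders, the identification of the exponent $p/(n(p-2)+p)$ as coming from the $\lambda^{2-p}$ time-rescaling combined with parabolic Sobolev, and the observation that the intrinsic-geometry bookkeeping is the genuinely hard step --- all of this is the correct picture of how \cite{AKM} proceeds. The approximation-by-smooth-data step you mention is also standard in this context. So nothing is wrong; you have simply supplied a roadmap the paper chose to omit.
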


The potential estimate connects the properties of the measure to the boundedness of its potential. Geometrically, only sufficiently large sets can support measures that are not too badly concentrated in order to make the integral of the above lemma converge, and Frostman's lemma makes this principle quantitative in terms of Hausdorff measure. The following proposition glues these ideas together.
\begin{proposition}
	\label{prop_h<c} Let $s > n$. Let $A \Subset \Omega_\infty$ be Borel. If $\mathcal{P}^{s}(A) > 0$ then $\capacity(A,\Omega_\infty) > 0$. 
\end{proposition}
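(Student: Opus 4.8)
The plan is to use the contrapositive: assuming $\capacity(A,\Omega_\infty) = 0$, I will show $\mathcal{P}^{s}(A) = 0$ for every $s > n$. A first reduction is to the case of rational $p$: if $p$ is irrational I would simply invoke the final remark after \Cref{lemma_frostman} (Frostman works in complete separable metric spaces regardless), or alternatively one may note the statement for a fixed set is insensitive to small perturbations and reduce to the rational case — but the cleanest path is to just state that \Cref{lemma_frostman} applies with no rationality restriction. Second, suppose for contradiction that $\mathcal{P}^{s}(A) > 0$ for some $s > n$. By \Cref{lemma_frostman} there is a compact $K \subset A$ with $\mathcal{P}^{s}(K) > 0$, and hence a nonzero Radon measure $\mu$ with $\supp\mu \subset K$ and the Frostman bound $\mu(E) \le d_p(E)^{s}$ for all $E$. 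In particular $\mu(Q_\rho^-(x,t)) \le \mu(Q_\rho(x,t)) \lesssim \rho^{s}$, so for $(x,t)$ ranging over $\Omega_\infty$
\[
	\left[\int_0^r \left(\frac{\mu(Q_\rho^-(x,t))}{\rho^n}\right)^{\frac{p}{n(p-2)+p}} \frac{{\rm d}\rho}{\rho}\right]^{\frac{n(p-2)+p}{p}} \lesssim \left[\int_0^r \rho^{(s-n)\frac{p}{n(p-2)+p}}\frac{{\rm d}\rho}{\rho}\right]^{\frac{n(p-2)+p}{p}} < \infty
\]
since $s - n > 0$ makes the $\rho$-integral converge at $0$; this bound is uniform, hence locally bounded. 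By \Cref{lemma_pot}, the solution $u_\mu$ of the measure data problem with datum $\mu$ and zero initial/boundary values is locally bounded. After a harmless normalization (replace $\mu$ by $c\mu$ with $c>0$ small) we may assume $0 \le u_\mu \le 1$; since $\supp\mu \subset K \subset A$, the measure $c\mu$ is admissible in the supremum defining $\capacity(A,\Omega_\infty)$ in \cref{capaKKKP} — more precisely it is admissible for $\capacity(K,\Omega_\infty)$ — so $\capacity(A,\Omega_\infty) \ge \capacity(K,\Omega_\infty) \ge c\mu(\Omega_\infty) > 0$, contradicting our assumption.

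The one technical point requiring care is that $u_\mu$ is the solution with zero \emph{parabolic boundary} values on $\partial_p\Omega_\infty$, whereas \Cref{lemma_pot} is stated for solutions with zero boundary and initial values in $\Omega_\infty$ — these are the same thing here, so no issue arises, but one should also check that local boundedness near the compact set $K \Subset \Omega_\infty$ (which is what \Cref{lemma_pot} delivers) combined with the fact that $u_\mu$ vanishes at the parabolic boundary yields global boundedness of $u_\mu$ on $\Omega_\infty$; this follows from standard comparison, bounding $u_\mu$ above by the solution with datum $\mu$ and boundary datum equal to $\sup_K u_\mu$ plus the maximum principle away from $\supp\mu$. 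The main obstacle, and the step I would spend the most words on, is precisely this normalization-plus-admissibility argument: verifying that after scaling $\mu$ one genuinely lands in the admissible class $\{0 \le u_\mu \le 1\}$ of \cref{capaKKKP} with $\supp\mu$ still inside $A$, and that the resulting lower bound $\capacity(A,\Omega_\infty) \ge c\,\mu(\Omega_\infty)$ is legitimate. Everything else — the Frostman construction, the elementary convergence of the Wolff-type integral for $s > n$, and the passage to local boundedness — is routine given the cited results.
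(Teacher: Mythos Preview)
Your overall strategy---construct a Frostman measure $\mu$ supported in a compact $K\subset A$, verify via \Cref{lemma_pot} that $u_\mu$ is locally (hence globally) bounded, and then feed a rescaled measure into the definition \cref{capaKKKP}---is natural and shares its first half with the paper's proof. The paper, however, finishes differently: rather than normalize $\mu$ so that $0\le u_{c\mu}\le 1$, it assumes $\capacity(A,\Omega_\infty)=0$ and applies the removability result \Cref{thm removability}. Since $u_\mu$ is a bounded weak solution in $\Omega_\infty\setminus K$ and $\capacity(K,\Omega_\infty)=0$, it extends to a weak solution on all of $\Omega_\infty$ with zero parabolic boundary data; the maximum principle then forces $u_\mu\equiv 0$, hence $\mu\equiv 0$, contradicting $\mu(K)>0$.

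The genuine gap in your route is the step you first label ``harmless'' and later flag as the main obstacle: replacing $\mu$ by $c\mu$ to land in the admissible class $\{0\le u_\nu\le 1\}$. The equation is nonlinear for $p>2$, so $u_{c\mu}\neq c\,u_\mu$, and there is no direct reason shrinking the datum by a scalar forces the potential below $1$. Monotonicity in the datum gives $u_{c\mu}\searrow$ as $c\searrow 0$, and a weak-convergence argument shows the limit is $0$ a.e., but lower semicontinuous functions decreasing pointwise to zero need not do so uniformly, so you cannot conclude $u_{c\mu}\le 1$ everywhere for some $c$. What would rescue your argument is a \emph{quantitative} pointwise bound $u_\mu\lesssim W_\mu$, where $W_\mu$ is the Wolff-type integral of \Cref{lemma_pot}: since that integral scales linearly, $W_{c\mu}=c\,W_\mu$, one would get $u_{c\mu}\le 1$ for small $c$. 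Such estimates are in the literature your \Cref{lemma_pot} cites, but the lemma as stated in the paper is purely qualitative and does not deliver this. The paper's detour through \Cref{thm removability} sidesteps the issue and uses only tools already established in the paper.
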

\begin{proof}
	Take $q \in \mathbb{Q}$. If $q > p$, then $d_{q}(E) \geq d_{p}(E)$ for all $E$ with small diameter. Consequently $\mathcal{P}^{s,d_p}(A) > 0$ implies $\mathcal{P}^{s,d_q}(A) > 0$. Using this condition with rational $q$, \Cref{lemma_frostman} ensures that there exists a nontrivial measure $\mu$ supported in $A$ such that 
	\begin{equation}
		\label{mu decay} \mu(Q^{p}_r) \lesssim  r^{p - q} \mu(Q^{q}_r) \leq r^{s -(q-p)} \quad \trm{for all} \ Q^q_{2r} \subset \Omega_\infty\, .
	\end{equation}
	Moreover there exists a compact set $K \subset A$ such that $\mu(K) > 0$.
	
	Let us make the counter assumption $\capacity(A,\Omega_\infty) = 0$. Let $\nu$ be a finite Radon measure with support inside $K$, and consider its corresponding lower semicontinuous potential, $u_{\nu}$ with zero boundary datum on $\Omega_\infty$. If $u_{\nu}$ is locally bounded then \Cref{thm removability} implies that $u_{\nu}$ can be extended across $K$ to a weak solution in $\Omega_\infty$. The maximum principle implies that the extension of $u_{\nu} \equiv 0$. Moreover since the original $u_{\nu}$ is lower semicontinuous the extension is exactly $u_{\nu}$, implying $\nu \equiv 0$. On the other hand, if $u_{\nu}$ is unbounded then \Cref{lemma_pot} implies that for any $r > 0$ there is a point $(x,t) \in \Omega_\infty$ such that
	\begin{equation} \nonumber \label{}
		\left [\int_{0}^{r} \left( \frac{\nu(Q^-_\rho(x,t))}{\rho^{n}} \right)^{\frac{p}{n(p-2) + p}} \frac{{\rm d} \rho}{\rho} \right ]^{\frac{n(p-2) + p}{p}} = \infty\,.
	\end{equation}
	 The above implies that for such $r$ and $(x,t)$ we cannot have
	\begin{equation} \nonumber \label{}
		 \nu(Q_{\rho}^-(x,t)) \lesssim \rho^{\sigma}, \quad \rho \leq r\,
	\end{equation}
for any $\sigma > n$. However since the measure $\mu$ constructed above restricted to $K$ satisfies $\mu(K) > 0$ and the decay estimate \Cref{mu decay} we see that our assumption that $A$ has zero parabolic capacity is false.
\end{proof}
We are now ready to collect our results, namely the following characterization of sets of zero parabolic capacity. With our weak methods we will not capture the borderline case $\mathcal{P}^{n}(A) > 0 \implies \capacity(A,\Omega_\infty) > 0$, it is not clear under which assumptions on the geometry of $A$ this implication holds for $p>2$. For example, in the case $p=2$, \emph{almost flat} sets support this implication (see \cite{TW}).
\begin{theorem}
	Let $A \subset \Omega_\infty$ be Borel. 
	\begin{itemize}
		\item If $\mathcal{P}^{n}(A) = 0$, then $\capacity (A,\Omega_\infty) = 0$. 
		\item If $\capacity (A,\Omega_\infty) = 0$, then $\mathcal{P}^{s}(A) = 0$ for all $s > n$. 
	\end{itemize}
\end{theorem}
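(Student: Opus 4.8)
The plan is to obtain both implications directly from \Cref{prop_c<h} and \Cref{prop_h<c}, using an exhaustion to pass from compactly contained sets to a general Borel $A$. Fix an increasing sequence of compact sets $K_1 \subset K_2 \subset \cdots$ with $\bigcup_k K_k = \Omega_\infty$, and set $A_k = A \cap K_k$; each $A_k$ is Borel with $\overline{A_k} \subset K_k \subset \Omega_\infty$, hence $A_k \Subset \Omega_\infty$, and $A = \bigcup_k A_k$.

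For the first bullet, suppose $\mathcal{P}^{n}(A) = 0$. By monotonicity of the Hausdorff content $\mathcal{P}^{n}(A_k) = 0$ for all $k$, so \Cref{prop_c<h} gives $\capacity(A_k, \Omega_\infty) = 0$; countable subadditivity of the parabolic capacity (the same subadditivity invoked in the proof of \Cref{prop_c<h}, see also \cite{KKKP}) then yields $\capacity(A, \Omega_\infty) \le \sum_k \capacity(A_k, \Omega_\infty) = 0$.

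For the second bullet, suppose $\capacity(A, \Omega_\infty) = 0$ and fix $s > n$. By monotonicity of the capacity $\capacity(A_k, \Omega_\infty) = 0$ for all $k$, so the contrapositive of \Cref{prop_h<c} gives $\mathcal{P}^{s}(A_k) = 0$; since $\mathcal{P}^{s}$ is an outer measure it is countably subadditive, whence $\mathcal{P}^{s}(A) \le \sum_k \mathcal{P}^{s}(A_k) = 0$.

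The two propositions do the real work, so the only point needing attention is this reduction to compactly contained sets, which rests on nothing beyond monotonicity and countable subadditivity of the two set functions; if one only wants the statement for $A \Subset \Omega_\infty$, the theorem is literally the conjunction of \Cref{prop_c<h} and \Cref{prop_h<c} and no exhaustion is required. The mildest subtlety — and the closest thing to an obstacle — is that the first implication uses countable, not merely strong, subadditivity of $\capacity$; this is available from \cite{KKKP} (and is precisely what the proof of \Cref{prop_c<h} already uses over a countable cover), so no new input is needed.
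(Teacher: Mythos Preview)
The proposal is correct and follows the same approach as the paper, which simply invokes \Cref{prop_c<h} and \Cref{prop_h<c}. Your exhaustion argument to pass from $A \Subset \Omega_\infty$ to general Borel $A \subset \Omega_\infty$ in fact supplies a detail the paper's one-line proof leaves implicit, since both propositions are stated only for compactly contained sets.
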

\begin{proof}
	This follows directly from \Cref{prop_c<h,prop_h<c}. 
\end{proof}

\end{document}